\newtheorem{theoremletter}{Theorem}
\numberwithin{equation}{section}
\newcommand {\N}{\mathbb{N}} 
\newcommand {\Z}{\mathbb{Z}}
\newcommand{\GG}{\mathcal{G}}
\newcommand{\LL}{\mathcal{L}}
\DeclareMathOperator{\End}{End}
\DeclareMathOperator{\Id}{Id}
\begin{document}

\title
{On Gottschalk's surjunctivity conjecture for non-uniform cellular automata}   
%
\author{Xuan Kien Phung\inst{1,2}\orcidID{0000-0002-4347-8931}}
\authorrunning{Phung, X.K.}
%
\institute{Département de Mathématiques et de Statistique, Université de Montréal, Montréal, Québec, H3T 1J4, Canada 
\and Département d'informatique et de recherche opérationnelle,  Université de Montréal, Montréal, Québec, H3T 1J4, Canada\\ 
\email{phungxuankien1@gmail.com \\    xuan.kien.phung@umontreal.ca}} 
\maketitle              
\begin{abstract}
Gottschalk's surjunctivity conjecture for a group  $G$ states  that it is impossible for cellular automata (CA) over the universe $G$ with finite alphabet to produce strict embeddings of the full shift into itself. A group universe $G$ satisfying Gottschalk's surjunctivity conjecture is called a surjunctive group. The surjunctivity theorem of Gromov and Weiss shows that every sofic group is surjunctive. In this paper, we study the  surjunctivity of local perturbations of CA and more generally of non-uniform cellular automata (NUCA) with finite memory and  uniformly bounded singularity over surjunctive  group universes. In particular, we show that such a NUCA must be invertible whenever it is reversible.  We also obtain similar results which extend to the class of NUCA a certain dual surjunctivity theorem of Capobianco, Kari, and Taati for CA.

\keywords{Gottschalk's conjecture  \and surjunctivity \and sofic group\and cellular automata \and  non-uniform cellular automata \and asynchronous cellular automata \and reversibility}
\end{abstract}

\section{Introduction}  
In computational science and engineering, cellular automata (CA), especially reversible CA,  arise as a fundamental and powerful model of simulation for various   physical and biological systems \cite{wolfram-book} whose global evolution is described by spatially uniform local transition rules. In computer science, it is well-known that the CA called Game of Life of Conway \cite{GOL} is Turing complete. Some  notable recent mathematical theory of CA achieves a  dynamical characterization of amenable groups by the Garden of Eden theorem for CA (pre-injectivity$\iff$surjectivity, see Section~\ref{s:definitions} below)  \cite{moore}, \cite{myhill}, \cite{ceccherini}, \cite{bartholdi-kielak} and establishes the equivalence between Kaplansky's stable finiteness conjecture \cite{kap} for group rings and  Gottschalk's surjunctivity conjecture (injectivity$\implies$surjectivity)  \cite{gottschalk} for linear CA  \cite{phung-geometric}, \cite{phung-weakly}, \cite{phung-cjm}, \cite{cscp-model}. In this paper, we first explore Gottschalk's surjunctivity conjecture (see Section~\ref{s:intro-gottschalk}) over surjunctive group universes (see Definition~\ref{def:surjunctive}), e.g. sofic groups such as amenable or residually finite groups, for non-uniform CA (NUCA) with finite memory. We then obtain an extension of a well-known dual version of  Gottschalk's surjunctivity conjecture due to Capobianco, Kari, and Taati \cite{kari-post-surjective} (see Section~\ref{s:intro-gottschalk}) over the class of post-injunctive groups (see Definition~\ref{def:post-injunctive}), e.g. sofic groups, also for NUCA with finite memory.  Our main results cover NUCA which are local perturbations of CA in which a finite number of cells can follow   local transition rules different from the local transition rule of the underlying CA. Among applications, such NUCA have  close connections with  asynchronous CA. For example,  if $F$ is an asynchronous CA in which only a finite number of cells are allowed to update  at each (discrete) time, e.g. fully asynchronous CA \cite{full-asyn-ca} and skew-asynchronous CA \cite{skew-asyn-ca}, then $F$ can be identified with a sequence $(F_n)_{n \in \N}$ of local perturbations of the identity CA: the system $F$ at time $n\in \N$ is exactly the NUCA $F_n \circ F_{n-1} \circ\dots \circ  F_0$. 
\par 
We generalize our results to a certain class of global perturbations of CA with uniformly bounded singularity. Essentially by definition and basic properties of CA, every reversible CA with finite memory over a surjunctive group universe must be invertible (see Section~\ref{s:definitions} for the exact definitions).   Gottschalk’s surjunctivity  conjecture amounts to saying that  every group is surjunctive. 
Hence, our results extend Gottschalk’s surjunctivity conjecture by showing that over surjunctive group universes, reversible NUCA with finite memory and uniformly bounded singularity must be  invertible (see Theorem~\ref{t:main-A} and Theorem~\ref{t:main-B}). 
\par  
\subsection{Basic definitions}
\label{s:definitions}
We recall notions of symbolic dynamics. 
Given a discrete set $A$ and a group $G$, a \emph{configuration} $x \in A^G$ is a map $x \colon G \to A$.  Two configurations $x,y  \in A^G$ are \emph{asymptotic} if  $x\vert_{G \setminus E}=y\vert_{G \setminus E}$ for some finite subset $E \subset G$.  We say that $x\in A^G$ is \emph{asymptotically constant} if $x$ is asymptotic to some constant configuration $y \in A^G$. The \emph{Bernoulli shift} action $G \times A^G \to A^G$ is defined by $(g,x) \mapsto g x$, 
where $(gx)(h) =  x(g^{-1}h)$ for  $g,h \in G$,  $x \in A^G$. The \emph{full shift} $A^G$ is equipped with the prodiscrete topology. For each $x \in A^G$, we define $\Sigma(x) = \overline{\{gx \colon g \in G\}} \subset A^G$ as the smallest closed subshift containing~$x$. For every subset $X\subset A^G$, we denote the restriction of $X$ to a subset  $F\subset G$ by  $X_F=\{x\vert_F \colon \in X\}$. 
\par 
Generalizing the first construction of a CA over $\Z^2$ by von Neumann 
  \cite{neumann} (see also  \cite{burks}),
a CA over the group $G$ (the \emph{universe}) and the set $A$ (the \emph{alphabet}) is   a $G$-equivariant and uniformly continuous  self-map $A^G \righttoleftarrow$  \cite{hedlund-csc}, \cite{hedlund}.  One usually refers to each element $g \in G$ as a cell of the universe. Then every CA is uniform in the sense that all the cells follow the same local transition map. More generally, when different cells can evolve according to different local transition maps to break down the above  uniformity, we obtain NUCA  \cite{Den-12a}, \cite{Den-12b}, \cite[Definition~1.1]{phung-tcs}:    

\begin{definition}
\label{d:most-general-def-asyn-ca}
Given a group $G$ and an alphabet $A$,  let $M \subset G$ and let $S = A^{A^M}$ be the set of all maps $A^M \to A$. For  $s \in S^G$, we define the NUCA $\sigma_s \colon A^G \to A^G$  by  
$
\sigma_s(x)(g)=  
    s(g)((g^{-1}x)  
	\vert_M)$ for all $x \in A^G$ and $g \in G$. 
 \end{definition} 
\par 
The set $M$ is called a \emph{memory} and $s \in S^G$  the \textit{configuration of local transition maps or local defining maps} of $\sigma_s$.  Every CA is thus a NUCA with finite memory and constant configuration of local defining maps. Conversely, we regard NUCA as \emph{global perturbations} of CA.  When $s, t \in S^G$ are asymptotic,  $\sigma_s$ and $\sigma_t$ are mutually \emph{local perturbations} of each other. We also observe that 
 NUCA with finite memory as defined in this paper form a proper subset of the set of  uniformly continuous selfmaps of the full shift (see also \cite{salo-2}[Proposition~14]), e.g. the map $F\colon A^G\to A^G$ given by $F(x)(g)=x(1_G)$ for all $x \in A^G$ and $g\in G$ is uniformly continuous but it is not a NUCA with finite memory.  
 As for CA,  NUCA with finite memory satisfy the closed image property \cite[Theorem~4.4]{phung-tcs}, several decidable and undecidable properties \cite{Den-12a}, \cite{kari-20}, \cite{phung-decidable-nuca} 
and  variants of the Garden of Eden theorem \cite{paturi}, \cite{phung-GOE-NUCA}. 
\par 
In this paper we analyze  the relations between the following  dynamic properties of $\sigma_s$. 
We say that   $\sigma_s$ is \emph{pre-injective} if $\sigma_s(x) = \sigma_s(y)$ implies $x= y$ whenever $x, y \in A^G$ are asymptotic. Similarly, $\sigma_s$ is \emph{post-surjective} if for all $x, y \in A^G$ with $y$ asymptotic to $\sigma_s(x)$,  then   $y= \sigma_s(z)$ for some $z \in A^G$ asymptotic to $x$. By the closed image property, it is known that post-surjectivity implies surjectivity for NUCA with finite memory. We say that $\sigma_s$ is  \emph{stably injective}, resp. \emph{stably post-surjective},  if  $\sigma_p$ is injective, resp. post-surjective,   for every $p \in \Sigma(s)$. 
\par 
The NUCA $\sigma_s$ is said to be \emph{reversible} or \emph{left-invertible} if there exists a NUCA with finite memory $\tau \colon A^G \to A^G$ such that $\tau \circ \sigma_s = \Id_{A^G}$.  A NUCA with finite memory is reversible if and only if it is stably injective (see \cite[Theorem~A]{phung-tcs}).  We say that  $\sigma_s$ is  \emph{invertible} if it is bijective and the inverse map  $\sigma_s^{-1}$ is a NUCA with {finite} memory \cite{phung-tcs}. We define also \emph{stable invertibility} which is in general stronger than invertibility, namely, $\sigma_s$ is stably invertible if there exist $N\subset G$ finite and $t \in T^G$ where $T=A^{A^N}$ such that for every $p \in \Sigma(s)$, we have   $\sigma_p \circ \sigma_q=\sigma_q \circ \sigma_p=\Id$  for some $q \in \Sigma(t)$.  In fact, we will show (see  Lemma~\ref{l:equivalent-stable-invertible}) that every invertible NUCA with finite memory over a finite alphabet and a countable group universe is automatically stably invertible.

\begin{remark}
\label{r:remark}
For CA, note that stable invertibility, resp. stable injectivity, resp. stable post-surjectivity, is equivalent to invertibility, resp.   injectivity, resp. post-surjectivity since $\Sigma(s)=\{s\}$ if the configuration $s \in S^G$ is constant.
\end{remark}

\begin{remark}
Suppose that the universe $G$ is a countably  infinite group and $s\in S^G$ is  asymptotic to a constant configuration $c\in S^G$. By \cite{phung-tcs}[Lemma~8.1] (which holds more generally for a countably infinite group $G$ by a similar proof), we have  $\Sigma(s)=\{c\}\cup\{gs\colon g \in G\}$. Consequently, a dynamic property  is stable for $\sigma_s$ simply means that the NUCA $\sigma_s$ and the CA $\sigma_c$ satisfy the same property. 
\end{remark}

\subsection{Main results}
\label{s:intro-gottschalk}
 
Gottschalk's surjunctivity conjecture  \cite{gottschalk} asserts that over any group universe, every CA with finite alphabet must be \emph{surjunctive} (injectivity$\implies$surjectivity). In other words, it is impossible for CA to  produce strict embeddings of the full shift into itself.   

\begin{definition}
\label{def:surjunctive}
A group $G$ is said to be surjunctive if for every finite alphabet $A$, every injective CA $\tau\colon A^G\to A^G$ must be surjective. 
\end{definition}

Every CA with finite memory is injective if and only if it is  reversible. Therefore, a group $G$ is surjunctive if and only if for every finite alphabet $A$, every reversible CA $\tau\colon A^G\to A^G$ must be invertible. 
Over the wide class of sofic group universes, the surjunctivity conjecture was famously shown by Gromov and Weiss in  \cite{gromov-esav},  \cite{weiss-sgds}.

\begin{theorem}[Gromov-Weiss]
\label{t:g-w}
    Every sofic group is surjunctive. 
\end{theorem}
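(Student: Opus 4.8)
The plan is to run the classical sofic-approximation-plus-counting argument. Recall that $G$ being sofic means there is a sequence of finite sets $D_i$ with $n_i := |D_i| \to \infty$ and maps $\sigma_i \colon G \to \Sym(D_i)$ that are asymptotically multiplicative and asymptotically free: for all $g,h\in G$ the proportion of $d\in D_i$ with $\sigma_i(gh)(d)=\sigma_i(g)\sigma_i(h)(d)$ tends to $1$, and for $g\neq h$ the proportion of $d$ with $\sigma_i(g)(d)\neq\sigma_i(h)(d)$ tends to $1$. Let $\tau\colon A^G\to A^G$ be an injective CA with finite memory set $M$ and local rule $\mu\colon A^M\to A$; I must show $\tau$ is surjective, so suppose it is not. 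Using $\sigma_i$ to read off the $M$-neighbourhood $\{\sigma_i(m)(d):m\in M\}$ of a cell $d$, I would define finite models $\tau_i\colon A^{D_i}\to A^{D_i}$ by $\tau_i(y)(d)=\mu\big((m\mapsto y(\sigma_i(m)(d)))\big)$. This coincides with the genuine local rule on the \emph{good} set $D_i^{\circ}$ of cells where $\sigma_i$ is faithful and multiplicative on $M\cup M^{-1}\cup\{1_G\}$, and by soficity $|D_i^{\circ}|/n_i\to 1$.

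The argument then rests on two opposing cardinality estimates for $\tau_i(A^{D_i})$. First, the \emph{injectivity bound}: since $A$ is finite, $A^G$ is compact, so the injective continuous map $\tau$ is a homeomorphism onto its image $Y:=\tau(A^G)$, whence $\tau^{-1}\colon Y\to A^G$ is a continuous $G$-equivariant map; by Curtis--Hedlund--Lyndon it is a sliding block code, i.e. there are a finite $\Omega\subset G$ and a map $\nu$ on the $\Omega$-patterns occurring in $Y$ with $x(1_G)=\nu\big((\tau x)\vert_\Omega\big)$ for every $x\in A^G$. Transporting $\nu$ through $\sigma_i$ recovers $z(d)$ from $\tau_i(z)$ at every good cell, so a fibre of $\tau_i$ is determined off $D_i\setminus D_i^{\circ}$, giving $|\tau_i(A^{D_i})|\ge |A|^{\,n_i-|D_i\setminus D_i^{\circ}|}=|A|^{\,n_i(1-o(1))}$. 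Second, the \emph{non-surjectivity bound}: as $Y$ is compact and shift-invariant it is a subshift, and being proper it omits some pattern $w\in A^W$ on a finite window $W$. Taking translates $g_1W,\dots,g_mW$ pairwise disjoint in $G$ and using soficity to realise $\approx c\,n_i$ of them as disjoint faithfully embedded windows in $D_i$, every image configuration avoids $w$ on each such window, leaving at most $|A|^{|W|}-1$ fillings per window and hence $|\tau_i(A^{D_i})|\le |A|^{\,n_i}\big(1-|A|^{-|W|}\big)^{m}=|A|^{\,n_i(1-\delta)}$ for a fixed $\delta>0$. For $i$ large the first bound exceeds the second, the desired contradiction, so $\tau$ is surjective and $G$ is surjunctive.

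The step I expect to be the main obstacle is the injectivity bound, because it is exactly here that one must use \emph{global} injectivity of $\tau$ rather than mere pre-injectivity: the weaker hypothesis would only bound fibres by a boundary term, and indeed for non-amenable sofic groups pre-injectivity need not force surjectivity, so a Garden-of-Eden style count alone cannot succeed. The essential point is that injectivity upgrades $\tau$ to a topological conjugacy onto $Y$, producing a genuine local inverse $\nu$ that can be pushed through the sofic maps; the care lies in verifying that on good cells the window pattern $(\tau_i z)\vert_{\sigma_i(\Omega)(d)}$ really is a legal $Y$-pattern on which $\nu$ is defined, and in controlling the accumulated error from the $o(n_i)$ bad cells. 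Both transfers, of the local inverse for the lower bound and of the forbidden pattern for the upper bound, hinge on the quality of the approximation, so the two estimates must be taken along one sequence $\sigma_i$ whose defects on the finite sets $M$, $\Omega$, and $W$ vanish simultaneously.
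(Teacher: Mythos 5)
The paper does not prove this statement: Theorem~\ref{t:g-w} is imported as a black box with citations to Gromov and Weiss, so there is no in-paper argument to compare yours against. On its own merits, your outline is the standard and correct proof (essentially Weiss's counting argument): injectivity plus compactness makes $\tau$ a homeomorphism onto $Y=\tau(A^G)$, the Curtis--Hedlund--Lyndon theorem for maps defined on the subshift $Y$ yields a local inverse $\nu$ with window $\Omega$, properness of $Y$ yields a forbidden pattern $w\in A^W$, and pushing both through a sofic approximation gives the contradictory bounds $|A|^{n_i(1-o(1))}\le|\tau_i(A^{D_i})|\le|A|^{n_i(1-\delta)}$. You have also correctly located the two delicate points (legality of the $\Omega$-pattern at good cells, so that $\nu$ applies, and taking the good set relative to the single finite set $\Omega M\cup WM\cup M$). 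Two small tightenings for a full write-up: the $\approx c\,n_i$ pairwise disjoint windows are most cleanly produced by a greedy/maximal selection of good cells whose $W$-windows are disjoint (each point of $D_i$ lies in at most $|W|$ such windows, so one gets at least $|D_i^{\circ}|/|W|^2$ of them), rather than by fixing disjoint translates $g_1W,\dots,g_mW$ in $G$, whose images based at varying cells need not be disjoint; and for a general group one should first reduce to the countable (indeed finitely generated) subgroup generated by $M$, so that a sequence of sofic approximations suffices.
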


The class of sofic groups was first  introduced by Gromov  \cite{gromov-esav} and includes all amenable groups and all residually finite groups. The question of whether there exists a non-sofic group is still open. 
\par 
 The situation for the surjunctivity of NUCA admits some complications. While injective and even reversible NUCA with finite memory may fail to be  surjective (see \cite[Example 14.3]{phung-tcs}), results in \cite{phung-tcs} show that reversible local perturbations of CA with finite memory over an amenable group or a residually finite group universe must be surjective. 
More generally, 
we obtain an extension  (Theorem~\ref{t:main-A})  to cover all reversible, or equivalently {stably injective}, local perturbations of CA over surjunctive group universes. We also strengthen the conclusion by showing that such NUCA must be stably invertible (see Section~\ref{s:a}). 
 
\par 
\begin{theoremletter}
\label{t:main-A}
Let $M$  be a finite subset of a countable surjunctive group $G$. Let $A$ be a finite  alphabet and  $S=A^{A^M}$. Suppose that $\sigma_s$ is stably injective for some asymptotically constant $s \in S^G$. Then $\sigma_s$ is stably invertible.  
\end{theoremletter}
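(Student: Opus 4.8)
The plan is to exploit the asymptotic constancy of $s$ to extract from $\Sigma(s)$ a genuine CA, to invert that CA using surjunctivity, and then to control the finite perturbation by a counting argument after conjugating it into a perturbation of the identity.

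First I would analyze $\Sigma(s)$. Write $\bar{s} \in S^G$ for the constant configuration with which $s$ agrees off a finite set $E \subset G$, and set $\tau = \sigma_{\bar{s}}$, the CA underlying the local perturbation $\sigma_s$. Translating $s$ by a sequence $g_n \to \infty$ pushes the singular set $g_n E$ out of every finite window, so $g_n s \to \bar{s}$ in the prodiscrete topology and hence $\bar{s} \in \Sigma(s)$ (the case of finite $G$ being trivial). Stable injectivity applied to the point $\bar{s}\in\Sigma(s)$ then gives that $\tau$ is an injective CA, while applied to $s$ itself it gives that $\sigma_s$ is injective. Since $G$ is surjunctive and injective CA are reversible, $\tau$ is invertible: it is bijective and $\tau^{-1}$ is again a CA with finite memory.

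Next I would conjugate. Put $\phi := \sigma_s \circ \tau^{-1}$. As a composition of finite-memory NUCA it is a uniformly continuous selfmap of $A^G$, hence a NUCA with finite memory. For every cell $g \notin E$ the local rule of $\sigma_s$ coincides with that of $\tau$, so $\phi(y)(g) = \sigma_s(\tau^{-1}(y))(g) = \tau(\tau^{-1}(y))(g) = y(g)$; thus $\phi$ agrees with the identity off $E$ and is itself a local perturbation of the \emph{identity} CA. Moreover $\phi$ is injective, because $\sigma_s$ is injective and $\tau^{-1}$ is bijective. Since $\tau$ is bijective, $\sigma_s$ is invertible if and only if $\phi$ is, so it suffices to prove that $\phi$ is invertible.

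Now comes the decisive step, a finite counting argument. Fix $y \in A^G$. Because $\phi$ restricts to the identity off $E$, any preimage $x$ of $y$ must satisfy $x = y$ off $E$, and for such $x$ the equation $\phi(x) = y$ reduces to the finite constraint $\phi(x)\vert_E = y\vert_E$. This defines a map $\Psi \colon A^E \to A^E$ sending the values $x\vert_E$ (with boundary fixed equal to $y$ off $E$) to $\phi(x)\vert_E$. If two inputs differ, the associated global configurations are asymptotic and distinct, so by pre-injectivity of $\phi$ their images differ; since both images agree with $y$ off $E$, they must differ on $E$, whence $\Psi$ is injective. As $A^E$ is finite, $\Psi$ is bijective, so $y\vert_E$ lies in its image and $y$ has a preimage. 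Therefore $\phi$ is surjective, hence bijective; as $A^G$ is compact its inverse is continuous, thus uniformly continuous, thus a finite-memory NUCA, so $\phi$ is invertible and consequently $\sigma_s = \phi \circ \tau$ is invertible. Finally I would invoke Lemma~\ref{l:equivalent-stable-invertible} to upgrade the invertibility of $\sigma_s$ to stable invertibility. The only genuine obstacle is passing from injectivity of $\sigma_s$ to surjectivity; this is exactly where surjunctivity is essential, entering through the invertibility of $\tau$, which makes the conjugate $\phi$ a perturbation of the identity and reduces surjectivity to the finite counting argument above.
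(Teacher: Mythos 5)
Your proposal is correct and follows essentially the same route as the paper: extract the constant configuration $c$ as a limit point of $\Sigma(s)$, invert the underlying CA $\sigma_c$ via surjunctivity, compose $\sigma_s$ with $\sigma_c^{-1}$ to obtain an injective local perturbation of the identity, and run a pigeonhole argument on the finitely many cells where it differs from the identity before upgrading to stable invertibility via Lemma~\ref{l:equivalent-stable-invertible}. The only (harmless) deviation is that you treat the finite counting step with a family of self-maps of $A^E$ parametrized by the boundary data $y\vert_{G\setminus E}$, whereas the paper enlarges $E$ to $FMN$ so that a single product decomposition $\sigma_q=\Phi\times\Id_{A^{G\setminus E}}$ holds.
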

\par 
Combining with Theorem~\ref{t:g-w} and the result in \cite[Theorem~B]{phung-tcs} where we can replace the stable injectivity by the weaker injectivity condition whenever $G$ is an amenable group, we obtain the following general surjunctivity and invertibility result for NUCA which are local perturbations of  CA. 
\par 
\begin{corollary}
\label{cor:main-intro}
  Let $M$  be a finite subset of a countable group $G$. Let $A$ be a finite alphabet and  $S=A^{A^M}$. Let $s \in S^G$ be an asymptotically constant configuration. Then $\sigma_s$ is stably invertible in each of the following cases: 
  \begin{enumerate}[\rm (i)] 
      \item $G$ is an amenable group and $\sigma_s$ is injective;
      \item 
      $G$ is a sofic group and $\sigma_s$ is reversible. 
  \end{enumerate}
\end{corollary}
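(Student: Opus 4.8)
The plan is to obtain the corollary by assembling Theorem~\ref{t:main-A}, the Gromov--Weiss theorem (Theorem~\ref{t:g-w}), and the equivalences recorded in Section~\ref{s:definitions}, so that the only genuine work is in establishing stable injectivity. First I would observe that all the standing hypotheses of Theorem~\ref{t:main-A} hold in both cases apart from stable injectivity: the group $G$ is countable by assumption, the configuration $s$ is asymptotically constant, and since every amenable group is sofic, Theorem~\ref{t:g-w} shows that $G$ is surjunctive in both cases. Consequently the whole proof reduces to verifying that $\sigma_s$ is stably injective in each case, after which Theorem~\ref{t:main-A} delivers stable invertibility with no further argument.

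In case (ii) this verification is immediate. By the equivalence recorded in Section~\ref{s:definitions}, namely \cite[Theorem~A]{phung-tcs}, a NUCA with finite memory is reversible if and only if it is stably injective; hence the assumed reversibility of $\sigma_s$ is literally the stable injectivity required by Theorem~\ref{t:main-A}. This case is therefore a formal substitution into the already-proved Theorem~\ref{t:main-A}.

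Case (i) is the only one demanding extra input, because there the hypothesis is merely that $\sigma_s$ is injective, which is strictly weaker than stable injectivity. Here the plan is to promote injectivity to stable injectivity by invoking the amenable-group refinement of \cite[Theorem~B]{phung-tcs}, according to which, over an amenable universe, an injective local perturbation of a CA with finite memory is automatically reversible, equivalently stably injective. Once this promotion is in hand the argument closes exactly as in case (ii): stable injectivity feeds into Theorem~\ref{t:main-A} and yields stable invertibility.

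I expect the main obstacle to lie wholly within case (i), in the step upgrading injectivity to stable injectivity. This implication is false for general surjunctive, and even for general sofic, groups, and it genuinely relies on amenability: the mechanism is a pre-injectivity versus entropy argument of Garden-of-Eden type that forbids a non-injective $\sigma_p$ for some $p \in \Sigma(s)$ while $\sigma_s$ itself stays injective. I would cite this as the content of \cite[Theorem~B]{phung-tcs} rather than reprove it, so that within the present paper the corollary becomes a direct combination of Theorem~\ref{t:main-A}, Theorem~\ref{t:g-w}, and the stated equivalences.
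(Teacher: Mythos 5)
Your proposal is correct and follows essentially the same route as the paper: the author likewise derives the corollary by combining Theorem~\ref{t:main-A} with Theorem~\ref{t:g-w} (amenable $\Rightarrow$ sofic $\Rightarrow$ surjunctive), reading off case (ii) from the equivalence of reversibility and stable injectivity in \cite[Theorem~A]{phung-tcs}, and handling case (i) by invoking \cite[Theorem~B]{phung-tcs} to replace stable injectivity by mere injectivity over an amenable universe. You correctly isolate the promotion of injectivity to stable injectivity as the only nontrivial input in case (i) and correctly attribute it to the cited amenable-group result rather than attempting to reprove it.
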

 
 \par 
Our next  results concern a certain dual surjunctivity  version of Gottschalk's conjecture introduced   by 
Capobianco, Kari, and Taati in \cite{kari-post-surjective} which  states that  every post-surjective CA over a group universe and a finite alphabet is also pre-injective. The authors settled in the same paper \cite{kari-post-surjective} the case of sofic group universes. 
See also \cite{phung-post-surjective} for some extensions.

\begin{theorem}[Capobianco-Kari-Taati] 
\label{t:dual}Let $G$ be a sofic group and let $A$ be a finite alphabet. Then every post-surjective CA $\tau\colon A^G\to A^G$ is pre-injective. 
\end{theorem}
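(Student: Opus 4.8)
The plan is to prove the statement by a counting argument on sofic approximations, following the template of the Gromov--Weiss surjunctivity theorem but with the two governing inequalities interchanged. Fix a sofic approximation sequence: finite sets $V_n$ with $|V_n| = m_n \to \infty$ carrying $G$-labelled graph structures such that, for every radius $r$ and every $\epsilon > 0$, eventually all but an $\epsilon$-fraction of the vertices of $V_n$ have their $r$-ball isomorphic to the $r$-ball of the Cayley graph of $G$. Writing $\tau$ via its finite memory $M$ and local rule $\mu \colon A^M \to A$ (so $\tau(x)(g)=\mu((g^{-1}x)|_M)$), I transport $\mu$ onto $V_n$ to obtain a local map $\tau_n \colon A^{V_n} \to A^{V_n}$, applying $\mu$ at every good vertex and arbitrarily at the few bad ones. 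The whole proof then reduces to comparing an upper and a lower bound for $\log_{|A|}|\tau_n(A^{V_n})|$, the point being that over a sofic universe both the strong hypothesis (post-surjectivity) and the negation of the weak conclusion (failure of pre-injectivity) admit quantitative finitary shadows.

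For the lower bound I would first extract from post-surjectivity a \emph{uniform bounded-lifting lemma}: by a compactness argument on $A^G$ there is a finite $K \ni e$ such that, for every $x \in A^G$ and every configuration $y$ differing from $\tau(x)$ in a single cell $g$, some preimage $z \in \tau^{-1}(y)$ agrees with $x$ off $gK$. Here I may use that $\tau$ is already surjective, since post-surjectivity implies surjectivity for finite-memory NUCA, so genuine preimages exist to start from. Transporting this bound to $V_n$ and correcting the cells of a prescribed target pattern one at a time, each correction being a modification confined to a translate of $K$, I would argue that the image $\tau_n(A^{V_n})$ contains all but a vanishing exponential fraction of $A^{V_n}$, that is $\log_{|A|}|\tau_n(A^{V_n})| \ge m_n\,(1 - o(1))$.

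For the upper bound, assume toward a contradiction that $\tau$ is not pre-injective. Then there are asymptotic $x \ne x'$ with $\tau(x)=\tau(x')$; after a translation they agree off a finite set $D \ni e$. Choosing a box $B$ large enough to contain $D$ together with its memory shadow $DM^{-1}$, I place $N \asymp m_n$ pairwise disjoint good translates $B_1,\dots,B_N$ of $B$ and observe that inside each $B_i$ the local value of a configuration may be switched between the $x|_D$- and $x'|_D$-patterns without changing $\tau_n$, because the affected output cells lie in $D_iM^{-1}\subseteq B_i$ and carry the common value $\tau(x)=\tau(x')$ there. A single box already forces $|\tau_n(A^{V_n})| \le |A|^{m_n}\bigl(1 - |A|^{-|B|}\bigr)$, and iterating over the $N$ disjoint boxes yields $\log_{|A|}|\tau_n(A^{V_n})| \le m_n\,(1 - c)$ for a constant $c>0$.

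For $n$ large these two estimates are incompatible, so $\tau$ must be pre-injective. The main obstacle is the lower bound, namely upgrading the qualitative post-surjectivity to the quantitative almost-surjectivity of $\tau_n$. The delicate point is that successive single-cell corrections live on overlapping translates of $K$, so a naive greedy procedure can undo earlier corrections; controlling this interaction---by processing the cells in a suitable order and bounding the set that remains unfixed, or by a fixed-point/averaging argument that exploits the uniform radius $K$---is where the real work lies. The upper bound, by contrast, is the familiar Garden-of-Eden-type deficiency count and is comparatively routine.
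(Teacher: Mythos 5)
This theorem is not proved in the paper: it is quoted from Capobianco--Kari--Taati \cite{kari-post-surjective} and used as a black box (to motivate Definition~\ref{def:post-injunctive}), so there is no in-paper proof to compare against. Your outline is, in substance, a faithful reconstruction of the original argument of \cite{kari-post-surjective}: the compactness-derived uniform constant $K$ for single-cell lifts, the transport of the local rule to a sofic approximation, the lower bound on $\log_{|A|}|\tau_n(A^{V_n})|$ by iterated corrections, and the Garden-of-Eden-style upper bound obtained by planting a non-pre-injectivity pattern in $\asymp m_n$ disjoint good boxes. One correction: the difficulty you single out as ``where the real work lies'' --- that successive corrections on overlapping translates of $K$ might undo earlier ones --- is not actually present. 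The single-cell lemma, as you yourself state it, produces a preimage $z$ of a configuration $y$ that differs from $\tau(x)$ \emph{only at the corrected cell}; so each correction step changes the output nowhere except at its own cell, leaves all previously corrected cells untouched, and the cells of the defect set may be processed in any order (the preimage modifications overlap, but nothing is required of the preimage beyond existence). The genuine care in the lower bound lies elsewhere: the lemma is an existence statement over the infinite configuration space $A^G$ and must first be converted into a \emph{local} statement --- the correcting pattern $z\vert_{gK}$ is witnessed inside a ball of bounded radius, and the identity $\tau(z)=y$ is verified on a bounded window --- before it can be transported to those vertices of $V_n$ whose neighborhoods of the corresponding radius are isomorphic to balls of $G$; the $\epsilon$-fraction of bad vertices is then discarded from the count. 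Both steps are routine but need to be said for the sketch to close into a complete proof.
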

\par 
The above result motivates the following notion of \emph{post-injunctive} groups. 
\begin{definition}
    \label{def:post-injunctive} A group $G$ is  {post-injunctive} if for every finite alphabet $A$, every post-surjective CA $\tau\colon A^G\to A^G$ must be pre-injective. 
\end{definition}

In particular, every sofic group is post-injunctive. By a similar technique as in the proof of Theorem~\ref{t:main-A}, we establish (see Section~\ref{s:b}) the following  extension of the above result of Capobianco, Kari, and Taati to cover the class of stably post-surjective local perturbations of CA over post-injunctive group universes.  

\begin{theoremletter}
\label{t:main-C} 
Let $G$ be a countable post-injunctive group and let $A$ be a finite alphabet.  Let $M\subset G$ be finite and $S=A^{A^M}$. Let $s \in S^G$ be  asymptotically constant such that $\sigma_s$ is stably post-surjective. Then $\sigma_s$ is stably invertible. 
\end{theoremletter}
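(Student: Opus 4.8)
The plan is to deduce everything from the behaviour of the \emph{unperturbed} CA sitting inside $\Sigma(s)$, and then to transfer bijectivity from it to $\sigma_s$ by a finite, NUCA-specific comparison — exactly the device used for Theorem~\ref{t:main-A}, only with injectivity replaced by post-surjectivity. Write $E\subset G$ for the finite set on which $s$ differs from the constant configuration $\bar c\in S^G$ it is asymptotic to; then $\sigma_{\bar c}$ is a genuine CA and $\sigma_s(x)$, $\sigma_{\bar c}(x)$ agree off $E$ for every $x\in A^G$. First I would record that $\bar c\in\Sigma(s)$: for any finite $F\subset G$ one can pick $g\in G$ with $gE\cap F=\emptyset$ (as $G$ is infinite), whence $(gs)\vert_F=\bar c\vert_F$; letting $F$ exhaust the countable group $G$ yields a sequence $g_n s\to\bar c$.

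Second, I would extract bijectivity of $\sigma_{\bar c}$. Since $\sigma_s$ is stably post-surjective and $\bar c\in\Sigma(s)$, the CA $\sigma_{\bar c}$ is post-surjective; because $G$ is post-injunctive (Definition~\ref{def:post-injunctive}), $\sigma_{\bar c}$ is then pre-injective. A pre-injective and post-surjective CA over an arbitrary group is reversible by the characterization of Capobianco, Kari and Taati \cite{kari-post-surjective}, so $\sigma_{\bar c}$ is bijective with CA inverse $\sigma_{\bar c}^{-1}$.

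The heart of the argument is the comparison map $\Psi:=\sigma_s\circ\sigma_{\bar c}^{-1}\colon A^G\to A^G$, a NUCA with finite memory. Since $\sigma_{\bar c}^{-1}$ is bijective, $\Psi$ is injective (resp. surjective) precisely when $\sigma_s$ is. The key observation is that $\Psi$ acts as the identity off $E$: for $y\in A^G$ and $g\notin E$ one has $\Psi(y)(g)=\sigma_{\bar c}(\sigma_{\bar c}^{-1}(y))(g)=y(g)$, while $\Psi(y)\vert_E$ is computed from $y$ through a fixed finite window $W$. Thus $\Psi$ leaves the frame $y\vert_{G\setminus E}$ untouched and merely recomputes the finitely many cells of $E$ by a local rule depending on finitely much of that frame. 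For each frame this induces a self-map of the finite set $A^E$, and for finite sets injectivity is equivalent to surjectivity; running over the finitely many relevant frame-patterns shows $\Psi$ is injective iff surjective. Consequently $\sigma_s$ is injective iff surjective.

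Finally I would close the loop. Taking $p=s\in\Sigma(s)$, the map $\sigma_s$ is post-surjective, hence surjective since post-surjectivity implies surjectivity for NUCA with finite memory; by the previous paragraph $\sigma_s$ is then also injective, so bijective. Being a continuous bijection of the compact space $A^G$ it is a homeomorphism, and as NUCA with finite memory are precisely the uniformly continuous self-maps of $A^G$, its inverse is again such a NUCA — that is, $\sigma_s$ is invertible — and Lemma~\ref{l:equivalent-stable-invertible} upgrades this to stable invertibility. I expect the main obstacle to be the second step, where the global hypothesis on $G$ must be converted into genuine bijectivity of $\sigma_{\bar c}$: post-injunctivity only yields pre-injectivity, so one really needs the arbitrary-group characterization of reversibility in \cite{kari-post-surjective} to promote ``pre-injective $+$ post-surjective'' to bijective. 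Everything after that is the finite comparison via $\Psi$, which carries over essentially verbatim from the injective setting of Theorem~\ref{t:main-A}.
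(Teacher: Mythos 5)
Your proposal is correct and follows essentially the same route as the paper: extract the constant configuration $\bar c\in\Sigma(s)$, use post-injunctivity plus the Capobianco--Kari--Taati characterization to invert $\sigma_{\bar c}$, compose $\sigma_s$ with that inverse to obtain a local perturbation of the identity, and transfer surjectivity to injectivity through a self-map of a finite set. The only cosmetic differences are that the paper enlarges the perturbation set so the comparison map splits as a genuine product $\Phi\times\Id$ rather than a family of fiber maps, and it closes via Corollary~\ref{c:post-surj-invertible} instead of your compactness argument (and it disposes of finite $G$ explicitly at the outset); none of this changes the substance.
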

\par 
Note that residually finite groups are sofic and thus post-injunctive. It follows that \cite[Theorem~D.(ii)]{phung-laa} is a consequence of our   
Theorem~\ref{t:main-C} when the group universe $G$ is residually finite.  
\par 
Our main results also hold for global perturbations of CA with \emph{uniformly bounded singularity}. A NUCA $\sigma_s \colon A^G \to A^G$ has {uniformly bounded singularity} if for every  finite subset $E\subset G$  with $1_G\in E=E^{-1}$, we can find a finite subset $F\subset G$ containing $E$  such that the restriction  $s\vert_{FE\setminus F}$ is constant. Note that in this paper, group operations are to be performed before boolean set operations. In particular, $FE\setminus F=(FE)\setminus F$. 
It is clear that $\sigma_s$ has uniformly bounded singularity if  $s$ is asymptotically constant. When the universe $G$ is a residually finite group, our notion of uniformly bounded singularity is closely related but not equivalent to the notion of (periodically) bounded singularity for NUCA defined in   \cite[Definition~10.1]{phung-tcs}. For  NUCA with uniformly
bounded singularity, we obtain the following generalization  of Theorem~\ref{t:main-A}  (see  Section~\ref{s:c}) 
 
\par 
\begin{theoremletter}
    \label{t:main-B} 
 Let $M$ be a finite subset of a finitely generated surjunctive group~$G$. Let $A$ be a finite alphabet and let $S=A^{A^M}$. Suppose that $\sigma_s \colon A^G \to A^G$ is stably injective for some $s \in S^G$ with uniformly bounded singularity. Then $\sigma_s$ is stably invertible.     
\end{theoremletter}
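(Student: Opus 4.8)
The plan is to reduce the whole statement to the surjectivity of $\sigma_s$, and then to feed surjunctivity of $G$ into the argument through the constant background rule. First I would record that, since $\sigma_s$ has finite memory and is stably injective, it is reversible by \cite{phung-tcs}; fix a finite-memory NUCA $\tau$ with $\tau\circ\sigma_s=\Id$. In particular $\sigma_s$ is injective, and a reversible NUCA is automatically bijective once it is surjective (from $\tau\sigma_s=\Id$ one gets $\sigma_s\tau\sigma_s=\sigma_s$, and if $\sigma_s$ is onto then every $y=\sigma_s(x)$ satisfies $\sigma_s\tau(y)=\sigma_s\tau\sigma_s(x)=y$, so $\sigma_s\tau=\Id$). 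Hence it suffices to prove that $\sigma_s$ is surjective: this gives invertibility, whence stable invertibility by Lemma~\ref{l:equivalent-stable-invertible}. Next I would normalize the singularity: since $S=A^{A^M}$ is finite, a cofinality argument along an exhaustion of $G$ by finite symmetric sets lets me extract a single symbol $c_0\in S$ such that for every finite symmetric $E\ni 1_G$ there is a finite $F\supseteq E$ with $s\vert_{FE\setminus F}$ constant equal to $c_0$.

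The key step is to show that the constant configuration $\mathbf c_0\in S^G$ lies in $\Sigma(s)$. Applying the normalization with $E$ ranging over the exhaustion, the constant shells $FE\setminus F$ become arbitrarily ``thick'', forcing $s$ to be constant equal to $c_0$ on translates $gW$ of arbitrarily large finite windows $W$; letting $W\uparrow G$ and taking a limit of the corresponding shifts $g^{-1}s$ then yields $\mathbf c_0\in\Sigma(s)$. Stable injectivity now says that the genuine CA $\sigma_{c_0}\colon A^G\to A^G$ is injective. Since $G$ is surjunctive and $A$ is finite, $\sigma_{c_0}$ is therefore surjective, hence bijective, and the inverse of a bijective CA is again a CA; thus $\sigma_{c_0}$ is invertible with finite-memory inverse $\sigma_{c_0}^{-1}$.

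With an invertible background in hand I would deduce surjectivity of $\sigma_s$ by a finite \emph{sealing} argument. Set $\phi:=\sigma_s\circ\sigma_{c_0}^{-1}$, a finite-memory NUCA with some memory $M_\phi$: it is injective (as $\sigma_s$ is and $\sigma_{c_0}^{-1}$ is bijective), it satisfies $\phi(A^G)=\sigma_s(A^G)$, and $\phi(x)(g)=x(g)$ at every cell $g$ with $s(g)=c_0$, because there $\sigma_s$ and $\sigma_{c_0}$ use the same local rule. Now fix $y\in A^G$ and a finite $D$, choose $E\supseteq D\cup M_\phi\cup M_\phi^{-1}$, and take a finite $F\supseteq E$ with $s\vert_{FE\setminus F}\equiv c_0$. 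Since $M_\phi\cup M_\phi^{-1}\subseteq E$, the constant shell seals the box: fixing the configuration equal to $y$ off $F$, the map $\phi$ descends to a self-map $\Psi\colon A^F\to A^F$, $\Psi(u)=\phi(x)\vert_F$ with $x=u$ on $F$ and $x=y$ off $F$. Indeed $\phi(x)$ equals $y$ on the shell (where $\phi$ is the identity) and, for $g\notin FE$, depends only on $x$ off $F$; hence $\Psi(u)=\Psi(u')$ forces $\phi(x)=\phi(x')$ everywhere, so $u=u'$ by injectivity of $\phi$. Thus $\Psi$ is an injective self-map of the finite set $A^F$, hence bijective, and some $x$ realizes $\phi(x)\vert_D=y\vert_D$. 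This shows $\sigma_s(A^G)=\phi(A^G)$ is dense; being closed by the closed image property, it equals $A^G$, so $\sigma_s$ is surjective and the proof concludes.

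I expect the main obstacle to be the extraction of the constant point $\mathbf c_0\in\Sigma(s)$ from the uniformly bounded singularity hypothesis. The sealing step is a clean pigeonhole once $\sigma_{c_0}$ is invertible, and the passage from injectivity to invertibility of the background is precisely where surjunctivity of $G$ is used; but converting the shell data $s\vert_{FE\setminus F}\equiv c_0$, in which $F$ may be far larger than $E$, into genuinely large constant windows (and thus a constant configuration in the orbit closure) is the delicate geometric point. It is transparent when $G$ is finitely generated, since shells of thickness comparable to $\mathrm{diam}(E)$ visibly contain large balls; for a general countable $G$ I would isolate this as a separate combinatorial lemma proved along the chosen exhaustion.
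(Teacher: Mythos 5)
Your overall architecture — reduce to surjectivity, invoke surjunctivity only on a constant background rule, and finish with a finite pigeonhole plus the closed image property — is sound, and the sealing argument with $\phi=\sigma_s\circ\sigma_{c_0}^{-1}$ and the injective self-map $\Psi$ of $A^F$ is correct; it is essentially the decomposition $\sigma_q=\Phi\times\Id_{A^{G\setminus E}}$ that the paper uses inside the proof of Theorem~\ref{t:main-A}. The genuine gap is exactly the step you flag: the claim that some constant configuration $\mathbf c_0$ lies in $\Sigma(s)$. For a general countable surjunctive $G$ this is not merely delicate but false under the stated hypotheses. Take $G$ an infinite locally finite group, e.g.\ $\bigoplus_{n\in\N}\Z/2\Z$ (countable, amenable, hence surjunctive). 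For any finite symmetric $E\ni 1_G$ the subgroup $H=\langle E\rangle$ is finite, and choosing $F=H$ gives $FE=F$, so $FE\setminus F=\varnothing$ and the uniformly bounded singularity condition is satisfied \emph{vacuously by every} $s\in S^G$. Now take $M=\{1_G\}$, $A=\{0,1\}$, fix $a\neq 1_G$ and choose each $s(g)$ to be a permutation of $A$ with $s(g)\neq s(ga)$ for all $g$ (possible since $g\mapsto ga$ pairs up the elements of $G$): then $\sigma_s$ is stably injective, $s$ has uniformly bounded singularity, yet no translate of $\{1_G,a\}$ is $s$-monochromatic, so $\Sigma(s)$ contains no constant configuration at all. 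Your proposed repair — shells of thickness comparable to $\mathrm{diam}(E)$ contain large translated windows — needs the exhausting sets to generate infinite subgroups; it is no accident that the paper's Theorem~\ref{t:main-D}, whose proof extracts a constant $c\in\Sigma(s)$ by exactly the pigeonhole-and-shell argument you sketch (via $g_i\in F_{k_i}E_{k_i}^2\setminus F_{k_i}E_{k_i}$), is stated only for finitely generated groups.

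The paper's proof of Theorem~\ref{t:main-B} takes a different route precisely to avoid needing a constant point in $\Sigma(s)$. Starting from a left inverse $\sigma_t\circ\sigma_s=\Id_{A^G}$ supplied by stable injectivity, Lemma~\ref{l:main-lemma-singular} manufactures, for any prescribed finite window $E$, \emph{asymptotically constant} configurations $p,q$ with $p\vert_E=s\vert_E$, $q\vert_E=t\vert_E$ and $\sigma_q\circ\sigma_p=\Id_{A^G}$. If $\sigma_s$ were not surjective, its (closed) image would already be deficient on some finite window $E$, hence so would the image of $\sigma_p$; but $\sigma_p$ is stably injective and asymptotically constant, so Theorem~\ref{t:main-A} makes it invertible — a contradiction. (For an asymptotically constant $p$ over an infinite group the constant configuration automatically lies in $\Sigma(p)$, which is why the orbit-closure step is harmless there.) To complete your argument in the stated generality you would either need to prove such a perturbation lemma yourself, or restrict to universes that are finitely generated or at least not locally finite, where your extraction of $\mathbf c_0\in\Sigma(s)$ can be carried out.
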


Similarly, we can extend  Theorem~\ref{t:main-C} to the context of   NUCA with uniformly
bounded singularity   (see Section~\ref{s:d}). 
\begin{theoremletter}
\label{t:main-D} 
 Let $G$ be a finitely generated post-injunctive group and let $A$ be a finite alphabet.  Let $M\subset G$ be finite and let $S=A^{A^M}$. Suppose that $\sigma_s \colon A^G~\to~A^G$ is stably post-surjective for some $s \in S^G$ with uniformly bounded singularity. Then $\sigma_s$ is stably invertible. 
\end{theoremletter}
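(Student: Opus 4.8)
The plan is to mirror the strategy that proves Theorem~\ref{t:main-A} (and its refinement Theorem~\ref{t:main-B}) for the post-surjective setting, replacing surjunctivity of the ambient group by post-injunctivity and stable injectivity by stable post-surjectivity. The key structural difference with Theorem~\ref{t:main-C} is that here $s$ is only assumed to have uniformly bounded singularity rather than being asymptotically constant, and $G$ is finitely generated rather than merely countable. First I would unwind the definition of uniformly bounded singularity: given a symmetric finite generating set $E=E^{-1}\ni 1_G$ for $G$, I obtain a finite $F\supset E$ on whose complement the defining configuration $s$ is \emph{constant} along the boundary layer $FE\setminus F$. The point of finite generation is that $E$ generates $G$, so iterating this boundary-constancy lets me control $s$ on arbitrarily large annuli and effectively reduce the analysis to a region where $\sigma_s$ looks like a genuine CA outside a fixed finite window. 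This is precisely the mechanism that upgrades the ``asymptotically constant'' arguments of Theorem~\ref{t:main-C} to the ``uniformly bounded singularity'' setting of Theorem~\ref{t:main-D}.

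The heart of the argument is to transfer the failure of pre-injectivity or invertibility for $\sigma_s$ to a genuine CA over $G$, where post-injunctivity applies. Concretely, I would associate to $\sigma_s$ a CA $\tau$ built from the constant value that $s$ takes outside the singular region, and show that stable post-surjectivity of $\sigma_s$ forces a corresponding post-surjectivity-type property for $\tau$ (or for an auxiliary CA over a larger alphabet encoding the finite singular data). Because $G$ is post-injunctive, Theorem~\ref{t:dual} (via Definition~\ref{def:post-injunctive}) yields that $\tau$ is pre-injective. The role of passing to $p\in\Sigma(s)$ in the definition of stable post-surjectivity is crucial here: working with the whole shift-closure $\Sigma(s)$ ensures that the limiting configurations obtained from the boundary-constancy of $s$ are themselves legitimate defining configurations for which $\sigma_p$ is post-surjective, so no pathology escapes at the ``boundary at infinity.''

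Having obtained pre-injectivity for the associated CA, I would then invoke the machinery already developed for Theorem~\ref{t:main-A}/\ref{t:main-B} to promote this to stable invertibility of $\sigma_s$. The route is: stable post-surjectivity together with the closed image property (cited in Section~\ref{s:definitions}) gives surjectivity of each $\sigma_p$, $p\in\Sigma(s)$; combined with the pre-injectivity transferred from $\tau$ one deduces bijectivity, and then the finite-memory reversibility characterization (\cite[Theorem~A]{phung-tcs}, stable injectivity $\iff$ reversibility) supplies a finite-memory inverse. Finally Lemma~\ref{l:equivalent-stable-invertible} upgrades invertibility to stable invertibility over the countable (here finitely generated) group $G$ with finite alphabet.

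The hard part, I expect, will be the transfer step in the second paragraph: making precise the reduction from the NUCA $\sigma_s$ with uniformly bounded singularity to an honest \emph{uniform} CA to which post-injunctivity can be applied, while keeping track of the finitely many singular cells. The subtlety is that post-surjectivity is a genuinely asymmetric, ``forward-looking'' property (unlike injectivity, which is purely local), so one must verify that the finite perturbation does not destroy the post-surjective behavior when comparing $\sigma_s$ with its uniform model $\tau$ — and conversely that a pre-injectivity failure for $\tau$ can be lifted back to a pre-injectivity failure for $\sigma_s$ without being absorbed into the singular window. Controlling this interaction between the fixed finite singularity and the global post-surjectivity condition, using finite generation of $G$ to propagate boundary-constancy, is where the main technical effort will lie.
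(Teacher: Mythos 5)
Your overall strategy matches the paper's: use finite generation and uniformly bounded singularity to pigeonhole over the annuli built from powers of a finite symmetric generating set, extract a single constant configuration $c$ agreeing with $s$ on translates $g_iE_{k_i}$ of arbitrarily large balls so that $c\in\Sigma(s)$, conclude from stable post-surjectivity that $\sigma_c$ is a post-surjective CA and hence pre-injective by post-injunctivity of $G$, and then transfer back to $\sigma_s$. Your first two paragraphs are essentially the paper's reduction, and your final appeal to Corollary~\ref{c:post-surj-invertible} and Lemma~\ref{l:equivalent-stable-invertible} is also how the paper concludes.

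The genuine gap is the transfer-back step, which you correctly flag as the hard part but whose proposed route does not work as stated. You write that surjectivity of $\sigma_s$ ``combined with the pre-injectivity transferred from $\tau$'' yields bijectivity; but pre-injectivity of the limiting CA $\sigma_c$ is a property of $\sigma_c$, not of $\sigma_s$, and there is no direct implication from it to injectivity (or even pre-injectivity) of the perturbed NUCA --- the whole content of the theorem is that this transfer requires an argument. The paper's mechanism is concrete: since $\sigma_c$ is pre-injective and post-surjective, it is invertible by Corollary~\ref{c:post-surj-invertible}, with inverse $\sigma_d$ for a constant $d$; one then forms $\sigma_q=\sigma_d\circ\sigma_s$ and shows, using the constancy of $s$ on the annulus $F_{k_j}E^5_{k_j}\setminus F_{k_j}$, that $q$ coincides with the projection $\pi$ on $F_{k_j}E^4_{k_j}\setminus F_{k_j}E_{k_j}$, so that $\sigma_q$ splits as a product $\Phi\times\Id\times\Psi$ in which $\Phi$ is a self-map of the finite set $A^{F_{k_j}E^2_{k_j}}$. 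A hypothetical failure of injectivity of $\sigma_s$ at a point $h$, arranged to lie inside that finite box, makes $\Phi$ non-injective, hence non-surjective, hence $\sigma_q$ non-surjective, contradicting the surjectivity of $\sigma_q=\sigma_d\circ\sigma_s$. Without this finite counting argument (or an equivalent substitute), your outline does not close; note also that the conclusion obtained this way is full injectivity of $\sigma_s$, after which pre-injectivity plus stable post-surjectivity gives stable invertibility via Corollary~\ref{c:post-surj-invertible}.
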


\par 
\subsection{Perspectives} 
Our main results  motivate the following natural questions which have an affirmative answer for surjunctive group universes and post-injunctive group universes respectively both in the setting of NUCA with finite memory and uniformly bounded singularity.  

\begin{question}
\label{conjecture-stable-surj} Let $G$ be a   group universe and let $A$ be a finite alphabet.  Let $M\subset G$ be finite and let $S=A^{A^M}$. Suppose that $\sigma_s \colon A^G \to A^G$ is stably injective for some configuration $s \in S^G$. Is $\sigma_s$ stably invertible?
\end{question}

\begin{question}
\label{Stable dual-surjunctivity}
Let $G$ be a  group universe and let $A$ be a finite alphabet.  Let $M\subset G$ be finite and let $S=A^{A^M}$. Suppose that $\sigma_s \colon A^G \to A^G$ is stably post-surjective for some configuration $s \in S^G$. Is $\sigma_s$  stably invertible?  
\end{question}

\par 
Investigating the relation between these two questions, we can show (see Corollary~\ref{pro:intro}) that Question~\ref{conjecture-stable-surj} and  Question~\ref{Stable dual-surjunctivity}  are in fact  equivalent when restricted to the class of linear NUCA with finite memory. For every (possibly infinite) vector space $V$, note that  $V^G$ is  a vector space with component-wise operations and a NUCA $\tau \colon V^G \to V^G$ is said to be \emph{linear} if it is also a linear map of vector spaces or equivalently, if its local transition maps are   linear. Proposition~\ref{p:prop-intro-dual} below provides a more precise relation between  Question~\ref{conjecture-stable-surj} and Question~\ref{Stable dual-surjunctivity} than just their equivalence in the setting of linear NUCA with finite memory. For the proof of Proposition~\ref{p:prop-intro-dual},  we will need the following notion of dual NUCA introduced in \cite{phung-laa}. 
\par 
Let $M\subset G$ be a finite subset and let $A$ be a vector space. Let $s\in S^G$ where $S =  \LL(A^M, A)$  denotes the space of linear maps from $A^{M}$ to $A$. By linearity, we can define the linear endomorphism $s(g,m) \in \End(A)$ for all $m \in M$ and $g \in G$ by the formula 
\begin{align}
\label{e:dual-nuca}
s(g)(v) = \sum_{m \in M} s(g,m)v(m) \quad \text{ for all }v \in A^M.
\end{align}
Setting $s(g,m)=0$ for all  $m \in G \setminus M$, we can write for all $v \in A^G$ that 
$$
s(g)(v) = \sum_{m \in G} s(g,m)v(m).
$$
Let $A^*$ be the dual space of $A$ and let $T = \LL(A^{*M^{-1}}, A^*)$. We use the right superscript $^\mathsf{T}$ to denote the transpose of linear maps. The dual configuration of local defining maps $s^* \in T^G$  is  given by  
$ 
s^*(g,m)\coloneqq  
s(gm,m^{-1})^\mathsf{T}$  for all $g,m \in G$. In particular, $s^*(g,m)=0$ for all $g\in G$ whenever $m^{-1} \notin M$ or equivalently when $m\notin M^{-1}$. Therefore,   for all $u \in (A^*)^{M^{-1}}$ and $g \in G$, we have 
    \begin{align}
    \label{e:dual-formula}
    s^*(g)(u) =  \sum_{m \in M^{-1}} s^*(g,m) u(m) = \sum_{m \in M^{-1}} s(gm, m^{-1})^\mathsf{T} u(m). 
    \end{align}
    The linear NUCA $\sigma_s^*\coloneqq  \sigma_{s^*}$ is called  
 the \emph{dual linear NUCA} of $\sigma_s$.  We note that    $s^{**}=s$ and 
$\sigma_{s}^{**}=\sigma_s$ for all $s \in S^G$ (see  \cite[Lemma~5.2]{phung-laa}).

\begin{proposition}
\label{p:prop-intro-dual}
Let $M$ be a finite subset of a group $G$ and let $A$ be a finite vector space alphabet. Let $S =  \LL(A^M, A)$ and let $s\in S^G$. Then the following  equivalences hold true: 
\begin{enumerate}[\rm (i)]
    \item $\sigma_s$ is invertible$\iff$$\sigma_{s^*}$ is invertible,
    \item 
    $\sigma_s$ is stably injective$\iff$$\sigma_{s^*}$ is stably post surjective,
    \item 
     $\sigma_s$ is stably post-surjective$\iff$$\sigma_{s^*}$ is stably injective,
     \item 
 $\sigma_s$ has uniformly bounded singularity if and only if so is $\sigma_{s^*}$.
 \item 
  $s$ is asymptotically constant  if and only if so is $ {s^*}$.
\end{enumerate}
    
\end{proposition}

\begin{proof} 
The equivalences (i), (ii), and (iii) result directly from \cite[Theorem~A]{phung-laa}. For (iv), let $E\subset G$ be a finite set such that $1_G\in E=E^{-1}$. Let $K\subset G$ be any finite subset  large enough such that $1_G\in K = K^{-1}$ and $MEM^{-1}\subset K$. Suppose first that $\sigma_s$ has uniformly bounded singularity.  Then there exists a finite subset $H\subset G$ such that $s\vert_{HK\setminus H}$ is constant. Let $F= HM$. We claim that $s^*\vert_{FE\setminus F}$ is also constant. Indeed, since $s\vert_{HK\setminus H}$ is constant and 
$$
s(g)(v) = \sum_{m \in M} s(g,m)v(m) \quad \text{ for all } v \in A^M, \,  g\in G, 
$$
there exists $t(m) \in \mathcal{L}(A^M, A)$ for every $m \in M$ such that  
\begin{align}
\label{l:dual-proof-1}
s(g,m) = t(m) \quad \text{ for all } g \in HK \setminus H,\,    m \in M. 
\end{align}
Let $g \in FE\setminus F$ and $m \in M^{-1}$. In particular, $m^{-1}\in M$ and  $g\notin F=HM$.  Thus $g\notin Hm^{-1}$ and we deduce that $gm\notin H$. On the other hand, $m \in M^{-1}$ and $g \in FE=HME$ thus $gm \in HMEm \subset HMEM^{-1}$. As $MEM^{-1}\subset K$, it follows that $gm \in HK$. We conclude that $gm\in HK\setminus H$. We then infer from \eqref{l:dual-proof-1}  that  $s(gm, m^{-1}) = t(m^{-1})$  for all $g \in FE\setminus F$ and $m \in M^{-1}$. Consequently, the formula \eqref{e:dual-formula} implies that for all $u \in (A^*)^{M^{-1}}$, we have 
\begin{align*}
s^*(g)(u) = \sum_{m \in M^{-1}} s(gm, m^{-1})^\mathsf{T} u(m) = \sum_{m \in M^{-1}} t(m^{-1})^\mathsf{T} u(m)  \quad \text{ for all } g \in FE\setminus F. 
\end{align*} 
Hence, $s^*\vert_{FE\setminus F}$ is constant. This proves that $\sigma_{s^*}$ has uniformly bounded singularity whenever $\sigma_s$ does. Since $(s^*)^* =s$, the converse also holds and the proof of (iv) is complete. 
\par
For (v), suppose that $s$ is asymptotically constant, that is, $s\vert_{G\setminus H}$ is constant for some finite subset $H\subset G$. Then a similar (but simler) argument as above shows that $s^*\vert_{G\setminus HM}$ is constant. Since $HM$ is finite, this means that $s^*$ is also asymptotically constant whenever $s$ is  asymptotically constant. As $(s^*)^*=s$, the converse is true and we can conclude that $s$ is asymptotically constant if and only if so is $s^*$.  
\qed   
\end{proof}

\begin{corollary}
\label{pro:intro}
Let $G$ be a countable group and let $A$ be a finite vector space alphabet. Then  Question~\ref{conjecture-stable-surj} has an affirmative answer  for all  $\mathrm{NUCA}$ $\tau\colon A^G\to A^G$ in the class $C$ if and only if so does Question~\ref{Stable dual-surjunctivity} where $C$ is one of the following:    
\begin{enumerate}[\rm (a)]
\item $C =\{\text{linear } \mathrm{NUCA} \text{ with finite memory}\}$; 
\item $C = \{\text{linear } \mathrm{NUCA} \text{  which are local perturbations of a linear CA}\}$; 
    \item $C = \{\text{linear }  \mathrm{NUCA} \text{  with finite memory and uniformly bounded singularity}\}$. 
\end{enumerate}
\end{corollary} 

\begin{proof}
 
    Note also that every invertible NUCA with finite memory over a finite alphabet and a countable group universe is automatically stably invertible (see  Lemma~\ref{l:equivalent-stable-invertible}). We identify $A$ with its dual vector space $A^*$. Let $M\subset G$ be a finite subset. Let $S= \mathcal{L}(A^M, A)$ and let $s\in S^G$. By  identifying $A$ with its dual vector space $A^*$, note that $s^*\in T^G$,  where $T= \mathcal{L}(A^{M^{-1}}, A)$,  and $(s^*)^*=s$. According to  Proposition~\ref{p:prop-intro-dual}.(i)-(ii), $\sigma_s$ is stably injective$\iff$$\sigma_{s^*}$ is stably post-surjective; and $\sigma_s$ is stably invertible$\iff$so is $\sigma_{s^*}$. Consequently, the conclusion of the corollary follows for the class $C$ of all linear NUCA (a). 
By  Proposition~\ref{p:prop-intro-dual}.(iv), $\sigma_s$ has uniformly bounded singularity if and only if so does $\sigma_{s^*}$. Hence, the conclusion holds true for the class $C$ described in (c). Similarly, $s$  is asymptotically constant if and only if so is   $\sigma_{s^*}$ by Proposition~\ref{p:prop-intro-dual}.(v) and thus the  conclusion of the corollary follows also in the case (b).  
    \qed 
\end{proof}

Linear NUCA with finite memory enjoy similar properties as linear CA or more generally group CA such as the shadowing property  \cite{birkhoff-pseudo-orbit}, \cite{bowen-shadow-equilibrium}, \cite{kurka-97}, \cite{blanchard-maass-97},  \cite{meyerovitch-pseudo-orbit}, \cite{chung-shadow}, \cite{phung-israel}, \cite{phung-shadowing}, \cite{cscp-jpaa}, \cite{phung-laa}. 
The recent result \cite[Theorem D.(ii)]{phung-laa}  states that every stably post-surjective linear NUCA with finite vector space alphabet  over a residually finite group universe is invertible whenever it is a local perturbation of a linear CA. When specialized to the class of linear NUCA, Theorem~\ref{t:main-C} thus generalizes \cite[Theorem D.(ii)]{phung-laa} because every residually finite group is sofic and thus post-injunctive by Theorem~\ref{t:dual}. To this end, we postulate another seemingly interesting  question. 

\begin{question}
Does an affirmative  answer to Question~\ref{conjecture-stable-surj} imply an affirmative answer to  Question~\ref{Stable dual-surjunctivity}? Conversely, does an affirmative answer to Question~ \ref{Stable dual-surjunctivity} imply an affirmative answer to  Question~\ref{conjecture-stable-surj}?
\end{question}  

As a related problem, we can study analogues of our main results with  different  hypotheses on the configuration $s$ of local transition maps. For example, when the $G$-orbit of $s$ is finite, we obtain the following   result (see Section~\ref{s:finite-orbit}).

\begin{theoremletter}
\label{t:finite-orbit} 
 Let $M$ be a finite subset of a countable group~$G$. Let $A$ be a finite alphabet and let $S=A^{A^M}$. Suppose that $s \in S^G$ has  finite $G$-orbit in $A^G$, i.e., $|Gs|=|\{gs\colon g \in G\}|<\infty$. The following hold: 
 \begin{enumerate}[\rm (i)]
     \item  If $G$ is surjunctive and $\sigma_s $ is  injective, then $\sigma_s$ is invertible. 
     \item If $G$ is post-injunctive and  $\sigma_s$ is post-surjective, then $\sigma_s$ is invertible.
 \end{enumerate}  
\end{theoremletter}

It is also interesting to extend computable properties of linear  CA or more generally group CA \cite{Den-21}, \cite{Den-24}, \cite{Den-25} to the  setting of linear or group NUCA.

 \subsection{Organization of the paper}  

In Section~\ref{s:pre-1}, we prove  that the notions of invertibility and stable invertibility are equivalent for NUCA with finite memory (Lemma~\ref{l:equivalent-stable-invertible}). As a consequence, we  
relate the surjectivity, the invertibility,  and the stable invertibility of a stably injective NUCA in Corollary~\ref{r:remark-stably-invertible}. As another application, we show that pre-injective stably post-surjective NUCA with finite memory must be stably invertible. 
We then fix in Section~\ref{s:pre-2}  the notations and recall the construction of induced local maps of NUCA in Section~\ref{s:pre-2}. The proofs of Theorem~\ref{t:main-A}, Theorem~\ref{t:main-C}, Theorem~\ref{t:main-B},  Theorem~\ref{t:main-D}, and Theorem~\ref{t:finite-orbit} are given respectively in the subsequent Sections \ref{s:a}, \ref{s:b}, \ref{s:c},   \ref{s:d}, and \ref{s:finite-orbit}. The last expository Section~\ref{s:another-proof} contains a more self-contained and constructive proof of Theorem~\ref{t:main-C} in the case of finitely generated sofic group universes.

\section{Invertibility vs stable invertibility}
\label{s:pre-1}
\begin{lemma}
\label{l:equivalent-stable-invertible} 
Let $M$ be a finite subset of a countable group $G$. Let $A$ be a finite alphabet and let $s\in S^G$ where $S= A^{A^M}$. Suppose that $\sigma_s$ is invertible. Then $\sigma_s$ is stably invertible. 
\end{lemma}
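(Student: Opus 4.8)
The plan is to realize the two defining relations of invertibility, namely $\sigma_t\circ\sigma_s=\sigma_s\circ\sigma_t=\Id_{A^G}$, as purely \emph{local} constraints on the pair $(s,t)$, and then to propagate these constraints to the entire shift-orbit closure by a compactness argument, so that the \emph{same} memory $N$ and configuration $t$ that invert $\sigma_s$ will witness its stable invertibility. First I would use the hypothesis: since $\sigma_s$ is invertible, its inverse $\sigma_s^{-1}$ is a NUCA with some finite memory $N$, so I may write $\sigma_s^{-1}=\sigma_t$ with $t\in T^G$ and $T=A^{A^N}$. Thus $\sigma_t\circ\sigma_s=\sigma_s\circ\sigma_t=\Id_{A^G}$, and these fixed $N$ and $t$ are the data I will produce for the definition of stable invertibility.

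The heart of the argument is the local encoding. Fix a cell $g\in G$. Unwinding Definition~\ref{d:most-general-def-asyn-ca}, the value $(\sigma_s\circ\sigma_t)(x)(g)$ depends only on $s(g)$, on the finite tuple $\big(t(gm)\big)_{m\in M}$, and on $x\vert_{gMN}$. By $G$-equivariance, the requirement that $(\sigma_s\circ\sigma_t)(x)(g)=x(g)$ hold for \emph{all} $x\in A^G$ becomes, after universally quantifying away $x$, a condition on the finite pattern of $(s,t)$ over $\{g\}\cup gM$ alone. Since $A$, $M$, and $N$ are finite, there are only finitely many admissible such patterns; collecting them defines a subshift of finite type $X_1\subset(S\times T)^G$, which is closed and $G$-invariant and consists of exactly those $(u,v)$ with $\sigma_u\circ\sigma_v=\Id_{A^G}$. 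Symmetrically, the relation $\sigma_v\circ\sigma_u=\Id_{A^G}$ cuts out a closed $G$-invariant set $X_2$. Because $(s,t)$ satisfies both relations, $(s,t)\in X_1\cap X_2$; and since $X_1\cap X_2$ is closed and $G$-invariant, the orbit closure satisfies $\Sigma\big((s,t)\big)\subseteq X_1\cap X_2$, where $\Sigma((s,t))\subset(S\times T)^G$ denotes the smallest closed subshift containing $(s,t)$.

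Finally I would extract simultaneous limits. Let $p\in\Sigma(s)$ be arbitrary. As $G$ is countable, all the relevant power spaces carry compact, metrizable prodiscrete topologies, so there is a sequence $(g_i)$ with $g_i s\to p$; passing to a subsequence, I may also assume $g_i t\to q$ for some $q$, which then lies in $\Sigma(t)$ because $\Sigma(t)$ is closed and contains each $g_i t$. Then $g_i(s,t)\to(p,q)$, hence $(p,q)\in\Sigma\big((s,t)\big)\subseteq X_1\cap X_2$, which means precisely $\sigma_p\circ\sigma_q=\sigma_q\circ\sigma_p=\Id_{A^G}$. As $p$ was arbitrary in $\Sigma(s)$, the pair $(N,t)$ witnesses the stable invertibility of $\sigma_s$.

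I expect the main obstacle to be the second step: one must verify carefully that the functional identity $\sigma_u\circ\sigma_v=\Id_{A^G}$ — a priori a statement quantified over all configurations at once — is genuinely equivalent to a finite-window, shift-invariant pattern condition on $(u,v)$, so that the set it defines is closed and $G$-invariant and therefore swallows the full orbit closure of $(s,t)$. Once this is established, the compactness extraction in the last step is routine, the only point being to take the limits of the translates of $s$ and of $t$ \emph{along the same} sequence so that the resulting $p$ and $q$ are genuinely coupled.
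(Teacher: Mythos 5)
Your proof is correct, and it takes a genuinely different (and more self-contained) route than the paper's. The paper's proof is a two-step reduction to an external result: it invokes \cite[Theorem~11.1]{phung-tcs} once on the relation $\sigma_s\circ\sigma_t=\Id_{A^G}$ to produce, for each $p\in\Sigma(s)$, some $q\in\Sigma(t)$ with $\sigma_p\circ\sigma_q=\Id_{A^G}$, then invokes it a second time on $\sigma_t\circ\sigma_s=\Id_{A^G}$ to find $r\in\Sigma(s)$ with $\sigma_q\circ\sigma_r=\Id_{A^G}$, and finally uses the elementary fact that a map with both a left and a right inverse is bijective with two-sided inverse to conclude $\sigma_q\circ\sigma_p=\Id_{A^G}$. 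You instead prove everything in one pass: you encode each of the two identities $\sigma_u\circ\sigma_v=\Id_{A^G}$ and $\sigma_v\circ\sigma_u=\Id_{A^G}$ as a finite-window, shift-equivariant condition on the pair $(u,v)$ (essentially the content of the paper's Lemma~\ref{l:compo-id}), observe that the resulting sets $X_1,X_2\subset (S\times T)^G$ are closed and $G$-invariant so that $\Sigma((s,t))\subset X_1\cap X_2$, and then extract, for a given $p\in\Sigma(s)$, a \emph{coupled} limit $q$ of $g_it$ along the same sequence with $g_is\to p$, using compactness of $T^G$ (valid since $A$, $N$ are finite and $G$ is countable, so the space is compact metrizable and sequential arguments suffice). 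This buys you both relations $\sigma_p\circ\sigma_q=\sigma_q\circ\sigma_p=\Id_{A^G}$ simultaneously, with no appeal to the cited theorem and no need for the left-plus-right-inverse trick; the cost is that you must carry out the local-encoding verification yourself, including the minor normalization $1_G\in M\cap N$ (harmless after enlarging the memory sets) needed so that $x(g)$ is read off the window $x\vert_{gMN}$. Both arguments are valid; yours is essentially an inlined proof of the special case of the black-box theorem that the paper uses.
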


\begin{proof}
Since $\sigma_s$ is invertible, there exist $N \subset G$ finite and $t \in T^G$ where $T=A^{A^N}$ such that $\sigma_s \circ \sigma_t = \sigma_t \circ \sigma_s=\Id_{A^G}$. For every $p \in \Sigma(s)$, we obtain from   the relation $\sigma_s \circ \sigma_t =\Id_{A^G}$ and \cite[Theorem 11.1]{phung-tcs}  some $q \in \Sigma(t)$ such that $\sigma_p \circ \sigma_q=\Id_{A^G}$. Since $q \in \Sigma(t)$ and $\sigma_t \circ \sigma_s=\Id_{A^G} $, another application of \cite[Theorem 11.1]{phung-tcs} shows that there exists $r \in \Sigma(s)$ such that $\sigma_q \circ \sigma_r=\Id_{A^G}$. It follows that $\sigma_q$ is invertible and thus $\sigma_p \circ \sigma_q=\sigma_q \circ \sigma_p=\Id_{A^G}$. Hence, $\sigma_s$ is stably invertible. \qed 
\end{proof}
\par 
\begin{corollary}
\label{r:remark-stably-invertible}
Let $G$ be a countable group and let $A$ be a finite alphabet. Let $\tau\colon A^G\to A^G$ be a stably injective NUCA with finite memory. Then the following are equivalent: 
\begin{enumerate}[\rm (i)]
    \item 
   $\tau$ is surjective, \item 
  $\tau$ is invertible, 
  \item 
  $\tau$ is stably invertible.
  \end{enumerate}
\end{corollary}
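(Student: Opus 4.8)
The plan is to establish the cyclic chain of implications (i)$\Rightarrow$(ii)$\Rightarrow$(iii)$\Rightarrow$(i), so that all three conditions become equivalent. The only nontrivial input I would use, beyond the essentially formal inclusions among the various notions of invertibility, is the characterization of reversibility: since $\tau$ is stably injective, \cite[Theorem~A]{phung-tcs} (recalled in Section~\ref{s:definitions}) furnishes a NUCA with finite memory $\tau'\colon A^G\to A^G$ satisfying $\tau'\circ\tau=\Id_{A^G}$. This finite-memory left inverse is the engine of the whole argument, and I would write $\tau=\sigma_s$ for a suitable finite memory $M$ and configuration $s\in S^G$ at the outset so as to apply the earlier results verbatim.

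For (i)$\Rightarrow$(ii), I would first observe that the identity $\tau'\circ\tau=\Id_{A^G}$ already forces $\tau$ to be injective. Assuming $\tau$ surjective, it is then bijective, so its set-theoretic inverse $\tau^{-1}$ exists; composing $\tau'\circ\tau=\Id_{A^G}$ on the right with $\tau^{-1}$ yields $\tau'=\tau^{-1}$. Hence $\tau^{-1}=\tau'$ is itself a NUCA with finite memory, which is precisely the definition of $\tau$ being invertible.

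The implication (ii)$\Rightarrow$(iii) is exactly Lemma~\ref{l:equivalent-stable-invertible}, which I would simply invoke. For (iii)$\Rightarrow$(i), I would specialize the definition of stable invertibility to the base configuration $p=s$, noting that $s\in\Sigma(s)=\overline{\{gs\colon g\in G\}}$ (take $g=1_G$); this produces some $q\in\Sigma(t)$ with $\sigma_s\circ\sigma_q=\sigma_q\circ\sigma_s=\Id_{A^G}$, so $\tau=\sigma_s$ is bijective and in particular surjective, closing the cycle.

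I do not anticipate a genuine obstacle here: once the finite-memory left inverse is in hand, each step is a short formal manipulation. The two points requiring care are (a) correctly deducing the equality $\tau'=\tau^{-1}$ rather than merely that $\tau'$ is a one-sided inverse, which is exactly where the surjectivity hypothesis of (i) enters; and (b) confirming $s\in\Sigma(s)$ so that stable invertibility can be evaluated at the base point in (iii)$\Rightarrow$(i). All the heavy lifting — the existence of a finite-memory left inverse for a stably injective NUCA, and the upgrade from invertibility to stable invertibility — is already packaged in \cite[Theorem~A]{phung-tcs} and Lemma~\ref{l:equivalent-stable-invertible}, so the corollary is a clean assembly of these facts.
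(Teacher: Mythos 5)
Your proposal is correct and follows essentially the same route as the paper: the cyclic chain (i)$\Rightarrow$(ii)$\Rightarrow$(iii)$\Rightarrow$(i), with (i)$\Rightarrow$(ii) obtained from the finite-memory left inverse supplied by \cite[Theorem~A]{phung-tcs} together with the definition of invertibility, (ii)$\Rightarrow$(iii) being Lemma~\ref{l:equivalent-stable-invertible}, and (iii)$\Rightarrow$(i) being immediate. You merely spell out the details (the identification $\tau'=\tau^{-1}$ and the observation $s\in\Sigma(s)$) that the paper leaves implicit.
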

\begin{proof}
    It is trivial that (iii)$\implies$(i). The implication (i)$\implies$(ii) results from the  definition of invertibility and  \cite[Theorem~A]{phung-tcs}. Lemma~\ref{l:equivalent-stable-invertible} states  that (ii)$\implies$(iii) and the proof is thus complete. \qed  
\end{proof}

\begin{corollary}
\label{c:post-surj-invertible}
Let $G$ be a countable group and let $A$ be a finite alphabet. Let $\tau\colon A^G\to A^G$ be a  
  pre-injective stably post-surjective NUCA with finite memory. Then $\tau$  is  stably invertible. 
\end{corollary}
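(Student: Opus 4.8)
The plan is to show that $\tau=\sigma_s$ is bijective, deduce invertibility for free from compactness, and then invoke Lemma~\ref{l:equivalent-stable-invertible} to pass to stable invertibility. Stable post-surjectivity makes $\sigma_s$ post-surjective and hence surjective, as recalled above. Since $A$ is finite, $A^G$ is compact (and, as $G$ is countable, metrizable), and $\sigma_s$ is continuous; a continuous bijection of a compact Hausdorff space is a homeomorphism, so its inverse is continuous, hence uniformly continuous, hence a NUCA with finite memory. Thus once $\sigma_s$ is known to be injective it is invertible, and Lemma~\ref{l:equivalent-stable-invertible} gives stable invertibility. Everything therefore reduces to proving injectivity; this mirrors, for NUCA, the reversibility criterion in the spirit of Capobianco, Kari and Taati \cite{kari-post-surjective} (reversible $\iff$ pre-injective and post-surjective).

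The step I expect to be the main obstacle is to distil from \emph{stable} post-surjectivity a location-independent bounded-lifting radius: a finite $K\subset G$ with $1_G\in K$ such that for every finite $E\subset G$, all $y,\bar y\in A^G$ with $\bar y=y$ on $G\setminus E$, and every $z$ with $\sigma_s(z)=y$, there is $\bar z$ with $\sigma_s(\bar z)=\bar y$ and $\bar z=z$ on $G\setminus KE$. Post-surjectivity of $\sigma_s$ alone supplies such a correcting $\bar z$ but only with an a priori unbounded and unlocated defect set; the reason for assuming post-surjectivity of every $\sigma_q$ with $q\in\Sigma(s)$ is that $\Sigma(s)\subset S^G$ is compact ($S=A^{A^M}$ being finite), while the minimal radius needed to repair a single-cell defect depends, through the finite memory $M$, only on the local rules in a bounded neighbourhood of that cell. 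A compactness argument over $\Sigma(s)$ then bounds this radius uniformly, and translating and patching single-cell corrections promotes the bound to arbitrary finite $E$. Making the upper semicontinuity of the repair radius precise is the delicate point, and it is exactly here that ``stable'' post-surjectivity is used rather than plain post-surjectivity.

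Granting uniform bounded lifting, I would prove injectivity by localisation against pre-injectivity. Suppose $\sigma_s(x)=\sigma_s(x')=y$ with $x\neq x'$, say $x(g_0)\neq x'(g_0)$, and let $w$ be the configuration equal to $x'$ on a large finite set $B\ni g_0$ and to $x$ on $G\setminus B$. Because $\sigma_s(x)(g)$ depends only on $x|_{gM}$, the image $\sigma_s(w)$ agrees with $y$ off the finite boundary layer $D=\{g: gM\cap B\neq\emptyset \text{ and } gM\cap(G\setminus B)\neq\emptyset\}$, so $\sigma_s(w)$ is asymptotic to $y$. Applying bounded lifting with $z=w$, $\bar y=y$ and $E=D$ yields $\bar w$ with $\sigma_s(\bar w)=y$ and $\bar w=w$ on $G\setminus KD$; choosing $B$ so large that $g_0\notin KD$ gives $\bar w(g_0)=w(g_0)=x'(g_0)$. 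On the other hand $\bar w$ differs from $x$ only inside the finite set $B\cup KD$, so $\bar w$ is asymptotic to $x$ and $\sigma_s(\bar w)=y=\sigma_s(x)$; pre-injectivity forces $\bar w=x$ and hence $\bar w(g_0)=x(g_0)$, contradicting $x(g_0)\neq x'(g_0)$. Therefore $\sigma_s$ is injective, which by the reduction of the first paragraph completes the proof.
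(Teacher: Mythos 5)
Your proposal is correct, and its overall skeleton coincides with the paper's: establish that $\tau=\sigma_s$ is invertible, then upgrade to stable invertibility via Lemma~\ref{l:equivalent-stable-invertible}. The difference is in how invertibility is obtained. The paper simply cites \cite[Theorem~13.4]{phung-tcs} (itself the NUCA analogue of the reversibility theorem of Capobianco--Kari--Taati), whereas you reconstruct that argument from scratch: surjectivity from (stable) post-surjectivity and the closed image property; invertibility from bijectivity by compactness of $A^G$ together with the characterization of finite-memory NUCA as the uniformly continuous selfmaps; and injectivity by playing a uniform bounded-lifting radius against pre-injectivity. Your identification of where stability is genuinely needed is exactly right: plain post-surjectivity of $\sigma_s$ gives a finite correction radius at each fixed cell, but only the post-surjectivity of every $\sigma_p$ with $p\in\Sigma(s)$ --- in particular at the limit points of $\{g^{-1}s\colon g\in G\}$ in the compact set $\Sigma(s)$ --- lets the usual diagonal/patching argument close and produce a single $K$ working at all cells; the subsequent single-cell-at-a-time induction and the boundary-layer localisation argument for injectivity are the standard Capobianco--Kari--Taati steps and are carried out correctly. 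What you leave as a sketch (the precise compactness argument for the uniform repair radius, i.e.\ extracting $p_n\to p$ in $\Sigma(s)$ and transporting a correction for $\sigma_p$ back to $\sigma_{p_n}$ by patching on a finite window) is completable along the lines you indicate and is precisely the content of the result the paper outsources to \cite{phung-tcs}. In short: same route, but self-contained where the paper relies on a citation; the only cost is that the delicate compactness step is asserted rather than fully executed.
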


\begin{proof}
The fact that $\tau$ is invertible follows from  \cite[Theorem 13.4]{phung-tcs} (see also \cite{kari-post-surjective}). We can thus conclude from  Lemma~\ref{l:equivalent-stable-invertible} that $\tau$ is stably invertible. \qed   
\end{proof}

\section{Induced local  maps of NUCA} 
\label{s:pre-2}
Let $G$ be a group and let $A$ be an alphabet. For every subset $E\subset G$ and $x \in A^E$ we define  $gx \in A^{gE}$ by  $gx(gh)=x(h)$ for all $h \in E$. In particular,    $gA^E=A^{gE}$. 
Let $M\subset G$ and  let $S=A^{A^M}$ be the collection of all maps $A^M \to A$. 
For every finite subset $E \subset G$  and $w \in S^{E}$,  
we define a map  $f_{E,w}^{+M} \colon A^{E M} \to A^{E}$  as follows. For every $x \in A^{EM}$ and $g \in E$, we set: 
\begin{align}
\label{e:induced-local-maps} 
    f_{E,w}^{+M}(x)(g) & = w(g)((g^{-1}x)\vert_M). 
\end{align}
\par 
\noindent 
In the above formula, note that  $g^{-1}x \in A^{g^{-1}EM}$ and $M \subset g^{-1}EM$ since $1_G \in g^{-1}E$ for $g \in E$. Therefore, the map   $f_{E,w}^{+M} \colon A^{E M} \to A^{E}$ is well defined. 
Consequently, for every $s \in S^G$, we have a well-defined induced local map $f_{E, s\vert_E}^{+M} \colon A^{E M} \to A^{E}$ for every finite subset $E \subset G$ which satisfies: 
\begin{equation}
\label{e:induced-local-maps-general} 
    \sigma_s(x)(g) =  f_{E, s\vert_E}^{+M}(x\vert_{EM})(g)
\end{equation}
for every $x \in A^G$ and $g \in E$. Equivalently, we have for all $x \in A^G$ that: 
\begin{equation}
\label{e:induced-local-maps-proof} 
    \sigma_s(x)\vert_E =  f_{E, s\vert_E}^{+M}(x\vert_{EM}). 
\end{equation}

For every $g \in G$, we have a canonical  bijection  $\gamma_g\colon G \mapsto G$ induced by the translation $a \mapsto g^{-1}a$. For each subset $K \subset G$, we denote by $\gamma_{g, K} \colon gK \to K$ the restriction to $gK$ of $\gamma_g$. 
Now let $N\subset G$ and $T=A^{A^N}$. Let $t \in T^G$. With the above notations, we have the following auxiliary lemma.  

\begin{lemma}
    \label{l:compo-id} Suppose that $1_G\in M\cap N$. Then for every $g \in G$, the condition $\sigma_t(\sigma_s(x))(g)=x(g)$ for all $x \in A^G$ is equivalent to the condition 
    $$t(g)\circ  \gamma_{g, N} \circ f^{+M}_{gN, s\vert_{gN}}\circ \gamma^{-1}_{g, NM}=\pi,$$
    where $\pi\colon A^{NM} \to A$ is the projection $z\mapsto z(1_G)$. 
\end{lemma}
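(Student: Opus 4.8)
The plan is to read $\gamma_{g,N}$ and $\gamma^{-1}_{g,NM}$ as the maps on configurations induced by the corresponding index bijections, and then to recognize that both sides of the claimed equivalence are just two ways of writing $\sigma_t(\sigma_s(x))(g)$ and $x(g)$ as functions of the single pattern $(g^{-1}x)\vert_{NM}$. First I would fix the convention that, for a subset $K\subset G$, the index bijection $\gamma_{g,K}\colon gK\to K$, $ga\mapsto a$, induces the restricted shift $\gamma_{g,K}\colon A^{gK}\to A^{K}$ given by $y\mapsto g^{-1}y$ (so $\gamma_{g,K}(y)(a)=y(ga)$), whose inverse $\gamma^{-1}_{g,K}\colon A^{K}\to A^{gK}$ is $z\mapsto gz$. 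With this reading the composite type-checks:
\[
A^{NM}\xrightarrow{\ \gamma^{-1}_{g,NM}\ }A^{gNM}\xrightarrow{\ f^{+M}_{gN,s\vert_{gN}}\ }A^{gN}\xrightarrow{\ \gamma_{g,N}\ }A^{N}\xrightarrow{\ t(g)\ }A,
\]
using $f^{+M}_{gN,s\vert_{gN}}\colon A^{gNM}\to A^{gN}$ from \eqref{e:induced-local-maps} and $t(g)\in T=A^{A^N}$. Thus both this composite and $\pi$ are maps $A^{NM}\to A$, which is the correct common target for the comparison.

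Next I would evaluate the composite on an arbitrary $z\in A^{NM}$ by unwinding the definitions. Applying $\gamma^{-1}_{g,NM}$ gives $gz\in A^{gNM}$; then $f^{+M}_{gN,s\vert_{gN}}$ followed by $\gamma_{g,N}$ yields the pattern on $N$ whose value at $a\in N$ is $s(ga)\big(((ga)^{-1}(gz))\vert_M\big)$. The key algebraic simplification is the left-action identity $(ga)^{-1}(gz)=a^{-1}z$, which turns this value into $s(ga)\big((a^{-1}z)\vert_M\big)$. In parallel I would expand the left-hand condition directly from the definitions of $\sigma_s$ and $\sigma_t$, obtaining
\[
\sigma_t(\sigma_s(x))(g)=t(g)\Big(\big(a\mapsto s(ga)\,((a^{-1}(g^{-1}x))\vert_M)\big)_{a\in N}\Big).
\]
Comparing the two expressions shows that for any $x\in A^G$, taking $z=(g^{-1}x)\vert_{NM}$ makes the inner patterns coincide, since $s(ga)((a^{-1}z)\vert_M)$ depends on $x$ only through $(g^{-1}x)\vert_{NM}$. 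Hence $\sigma_t(\sigma_s(x))(g)=\big(t(g)\circ\gamma_{g,N}\circ f^{+M}_{gN,s\vert_{gN}}\circ\gamma^{-1}_{g,NM}\big)\big((g^{-1}x)\vert_{NM}\big)$, while $x(g)=(g^{-1}x)(1_G)=\pi\big((g^{-1}x)\vert_{NM}\big)$.

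Finally, since the restriction map $x\mapsto(g^{-1}x)\vert_{NM}$ from $A^G$ onto $A^{NM}$ is surjective, the pointwise identity $\sigma_t(\sigma_s(x))(g)=x(g)$ for all $x\in A^G$ is equivalent to the equality of the two induced maps $A^{NM}\to A$, namely $t(g)\circ\gamma_{g,N}\circ f^{+M}_{gN,s\vert_{gN}}\circ\gamma^{-1}_{g,NM}=\pi$, which is the assertion. I expect the only real difficulty to be bookkeeping rather than conceptual: one must track the shifts so that all the intermediate restrictions to $M$ are legitimate. This is precisely where the hypothesis $1_G\in M\cap N$ enters, as it guarantees $1_G\in NM$ (so that $\pi$ and the identity $x(g)=(g^{-1}x)(1_G)$ make sense), that $aM\subseteq NM$ for $a\in N$ (so that $(a^{-1}z)\vert_M$ is defined for $z\in A^{NM}$), and that $M\subseteq g^{-1}EM$ in \eqref{e:induced-local-maps} so that $f^{+M}_{gN,s\vert_{gN}}$ itself is well defined.
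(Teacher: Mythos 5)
Your proposal is correct and follows essentially the same route as the paper: both unwind the definitions of $\sigma_t$, $\sigma_s$, and $f^{+M}_{gN,s\vert_{gN}}$ to show $\sigma_t(\sigma_s(x))(g)=\bigl(t(g)\circ\gamma_{g,N}\circ f^{+M}_{gN,s\vert_{gN}}\circ\gamma^{-1}_{g,NM}\bigr)\bigl((g^{-1}x)\vert_{NM}\bigr)$ and $x(g)=\pi\bigl((g^{-1}x)\vert_{NM}\bigr)$, then conclude via surjectivity of $x\mapsto(g^{-1}x)\vert_{NM}$. The only cosmetic difference is that you evaluate the composite on a generic $z\in A^{NM}$ and match, whereas the paper rewrites the left-hand side step by step into the composite form; the content is identical.
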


\begin{proof}
  For every $g \in G$ and $x \in A^G$, we deduce from Definition~\ref{d:most-general-def-asyn-ca} and the relation \eqref{e:induced-local-maps-proof} that 
  \begin{align*}
     \sigma_t(\sigma_s(x))(g) & = t(g) \left( (g^{-1}\sigma_s(x))\vert_N\right)\\
     & = t(g) \left( \gamma_{g,N}\left( (\sigma_s(x))\vert_{gN}\right)\right)\\
     & =  t(g) \left( \gamma_{g,N}\circ f^{+M}_{gN, s\vert_{gN}}(x\vert_{gNM})\right) \\
     & =  t(g) \circ \gamma_{g,N}\circ f^{+M}_{gN, s\vert_{gN}}(x\vert_{gNM}) \\
     & =  t(g) \circ  \gamma_{g,N}\circ f^{+M}_{gN, s\vert_{gN}}\circ \gamma^{-1}_{g, NM}\left((g^{-1}x)\vert_{NM}\right) 
  \end{align*}
  from which the conclusion follows as $x(g)= (g^{-1}x)(1_G)$. 
\qed    \end{proof}

\section{Proof of Theorem~\ref{t:main-A}} 
\label{s:a}

We give below the proof of the stable invertibility of  stably injective NUCA which are local perturbations of CA over a surjunctive group universe. 

 \begin{proof}
 
Theorem~\ref{t:main-A} is trivial if $G$ is finite since every injective selfmap of a finite set is also surjective. Thus, without loss of generality we can suppose that $G$ is infinite.  Up to enlarging $M$ if necessary, we can also assume that $1_G\in M$. 
As $s\in S^G$ is asymptotically constant, we can find a constant configuration $c\in S^G$ and a finite subset $F\subset G$ such that $s\vert_{G \setminus F} =  c\vert_{G \setminus F}$. Note that as $G$ is infinite, we have $c\in \Sigma(s) $. 
Since $\sigma_s$ is stably injective, we deduce that  $\sigma_c\colon A^G\to A^G$ is an injective CA. We infer from the surjunctivity of the group $G$ that $\sigma_c$ is also surjective. By Corollary~\ref{r:remark-stably-invertible} and Remark~\ref{r:remark}, it follows that $\sigma_c$ is invertible. Therefore,  
there exist a nonempty finite subset $N\subset G$ 
and a constant configuration $d \in T^G$, where $T=A^{A^N}$, such that $1_G\in N$ and  
\begin{equation}
    \label{e:theorem-a-1}
\sigma_c\circ \sigma_d=\sigma_d\circ \sigma_c=\Id_{A^G}. 
\end{equation}
 
\par 
The set of NUCA with finite memory over the universe $G$ and the alphabet $A$ forms a monoid  with respect to the composition operation and we obtain from \cite[Theorem~6.2]{phung-tcs}  a configuration $q\in Q^G$, where $Q=A^{A^{MN}}$, such that $\sigma_s \circ \sigma_d=\sigma_q$. Note that $MN$ is a memory set of $\sigma_q$. 
As both $\sigma_s$   and $\sigma_d$ are  injective,  $\sigma_q=\sigma_s \circ \sigma_d$ is  also  injective. 
Since  $\sigma_c \circ \sigma_d=\Id_{A^G}$ and  $\sigma_s \circ \sigma_d=\sigma_q$ and  $s$ is asymptotic to $c$, we deduce that the configuration $q \in Q^G$ is asymptotic to the constant configuration $\pi^G$ where $\pi\colon A^{MN} \to A^{\{1_G\}}$ is the canonical projection $x\mapsto x(1_G)$. More precisely, $q\vert_{G\setminus F}=\pi^{G\setminus F}$ since  $s\vert_{G\setminus F}=c\vert_{G\setminus F}$ so that for every $g \in G\setminus F$ (see \eqref{s:pre-2} and the proof of  \cite[Theorem~6.2]{phung-tcs}): 
\begin{equation}
\label{e:proof-A-id-F}
q(g) = s(g) \circ f^{+N}_{M, g^{-1}d\vert_{M}} = c(g) \circ f^{+N}_{M, g^{-1}d\vert_{M}} = \pi 
\end{equation}
where the last equality results from \eqref{e:theorem-a-1}. 
\par 
Let $E=FMN\subset G$ then $E$ is  finite and $F\subset E$ as $1_G\in M\cap N$. It follows that    $q\vert_{G\setminus E} = \pi^{G\setminus E}$. 
Consider the map $\Phi\colon A^{E} \to A^E$ induced by the restriction of $\sigma_p$ to $A^E$. More specifically, for every $x \in A^E$, we define   
\begin{equation}
\label{e:proof-A-id-F-2}
\Phi(x)=\sigma_q(y)\vert_{E}
\end{equation}
for any configuration $y \in A^G$ extending $x$, that is, $y\vert_E=x$. To check that $\Phi$ is well-defined, let $z\in A^G$ be another configuration such that $z\vert_E=x$. Let $g \in E\setminus F$ then we have $q(g)=\pi$ and thus 
\begin{align*}
    \sigma_q(z)(g)& =  q(g)((g^{-1}z)  
	\vert_{MN}) = \pi ((g^{-1}z)  
	\vert_{MN}) \\
    &= z(g)=x(g) = y(g) \\& = \pi ((g^{-1}y)  
	\vert_{MN})  =q(g)((g^{-1}y)  
	\vert_{MN})  \\& 
    = \sigma_q(y)(g). 
\end{align*}
Now let $g \in F$. Then $(g^{-1}z)\vert_{MN}= (g^{-1}y)  
	\vert_{MN}$ since $(g^{-1}z)(h) = z(gh)=x(gh)=y(gh)= (g^{-1}y)(h)$ for all $h\in MN$. Therefore,  
 \begin{align*}
    \sigma_q(z)(g)& =  q(g)((g^{-1}z)  
	\vert_{MN}) =  q(g)((g^{-1}z)  
	\vert_{MN}) = q(g)((g^{-1}u)  
	\vert_{MN}) =  \sigma_q(z)(g). 
    \end{align*}
We conclude that $  \sigma_q(z)\vert_E=  \sigma_q(y)\vert_E$ and thus $\Phi$ is well-defined. 
\par 
As $MN$ is a memory set of $\sigma_q$,  we infer from   \eqref{e:proof-A-id-F} and the relation $E=FMN$   that  $\sigma_q(x)\vert_{G\setminus E}=x\vert_{G\setminus E}$ for every $x \in A^G$.  
Combining with   \eqref{e:proof-A-id-F-2}, we obtain 
$$
\sigma_q = \Phi \times \Id_{A^{G \setminus E}}\colon A^E \times A^{G\setminus E} \to A^E \times A^{G\setminus E}  
$$
because  for every   $x=(y,z) \in A^E \times A^{G\setminus E}=A^G$ where $y=x\vert_E$ and $z=x\vert_{G\setminus E}$, we have  $\sigma_q(x)=(\sigma_q(x)\vert_E, \sigma_q(x)\vert_{G\setminus E})=(\Phi(y), z) \in A^G$. We note here that such a map $\sigma_q$ is related to the notion of a \emph{gate} on a subshift \cite{salo}[Lemma 15].  

\par 
Since $\sigma_q$ and $\Id_{A^{G \setminus E}}$ are injective, we deduce that $\Phi$ is also injective. Consequently, $\Phi$ must be surjective as $A^E$ is finite. It follows that $\sigma_q= \Phi \times \Id_{A^{G \setminus E}}$ is  surjective. Combining with \eqref{e:theorem-a-1} and the surjectivity of $\sigma_c$, we find  that $\sigma_s= \sigma_q\circ (\sigma_d)^{-1}=\sigma_q\circ \sigma_c$ is also surjective. Since  $\sigma_s$ is stably injective by hypothesis, we can thus conclude from  Corollary~\ref{r:remark-stably-invertible} that  $\sigma_s$ is stably invertible and the proof is complete. \qed 
 \end{proof}

\section{Proof of Theorem~\ref{t:main-C}} 
\label{s:b}

By following the same lines, \emph{mutatis mutandis}, as in the proof of Theorem~\ref{t:main-A}, we obtain below the proof of Theorem~\ref{t:main-C} on the stable invertibility of stably post-surjective NUCA which are local perturbations of CA over post-injunctive group universes.

 \begin{proof}
 We can suppose without loss of generality that $G$ is infinite since  every surjective selfmap of a finite set is also injective.  Up to enlarging $M$, we can also assume that $1_G\in M$. 
By hypothesis, there exist a constant configuration $c\in S^G$ and a finite subset $F\subset G$ with  $s\vert_{G \setminus F} =  c\vert_{G \setminus F}$. Note again that $c\in \Sigma(s)$ as $G$ is infinite. 
We infer from the stable post-surjectivity of $\sigma_s$ that the CA  $\sigma_c\colon A^G\to A^G$ is post-surjective. Since $G$ is post-injunctive,  $\sigma_c$ is pre-injective  and thus invertible by Corollary~\ref{c:post-surj-invertible} and Remark~\ref{r:remark}. Hence,  
we can find finite subset $N\subset G$ 
and a constant configuration $d \in T^G$, where $T=A^{A^N}$, such that $1_G\in N$ and  
\begin{equation}
    \label{e:theorem-a-2}
\sigma_c\circ \sigma_d=\sigma_d\circ \sigma_c=\Id_{A^G}. 
\end{equation}
 
\par 
By \cite[Theorem~6.2]{phung-tcs}, there exists  a configuration $q\in Q^G$, where $Q=A^{A^{MN}}$, such that $\sigma_s \circ \sigma_d=\sigma_q$.  
As $\sigma_s$   and $\sigma_d$ are  surjective, so is $\sigma_q=\sigma_s \circ \sigma_d$. 
Let $\pi\colon A^{MN} \to A^{\{1_G\}}$ be the canonical projection $x\mapsto x(1_G)$. As $s\vert_{G\setminus F}=c\vert_{G\setminus F}$, we have  $q\vert_{G\setminus F}=\pi^{G\setminus F}$ since  for every $g \in G\setminus F$  (see the proof of  \cite[Theorem~6.2]{phung-tcs}): 
$$
q(g) = s(g) \circ f^{+N}_{M, g^{-1}d\vert_{M}} = c(g) \circ f^{+N}_{M, g^{-1}d\vert_{M}} = \pi
$$
where the last equality follows from \eqref{e:theorem-a-2}. 
Let $E=FMN\subset G$ then $F\subset E$ as $1_G\in M\cap N$. It follows that    $q\vert_{G\setminus E} = \pi^{G\setminus E}$. 
As in the proof of Theorem~\ref{t:main-A}, we can write $ 
\sigma_q = \Phi \times \Id_{A^{G \setminus E}} 
$ where  $\Phi\colon A^{E} \to A^E$ is the map given by 
$ 
\Phi(x)=\sigma_q(y)\vert_{E} 
$ 
for every $x \in A^E$ and any  $y \in A^G$ such that $y\vert_E=x$. 
 Then  $\Phi$ is surjective because   $\sigma_q$ and $\Id_{A^{G \setminus E}}$ are both surjective. It follows that $\Phi$ must be injective as $A^E$ is finite. 
 Consequently, $\sigma_q= \Phi \times \Id_{A^{G \setminus E}}$ is  injective. Thus by \eqref{e:theorem-a-2}, we have $\sigma_s= \sigma_q\circ (\sigma_d)^{-1}=\sigma_q\circ \sigma_c$ is injective and thus pre-injective. By the  stable post-surjectivity of  $\sigma_s$ and Corollary~\ref{c:post-surj-invertible}, we conclude that  $\sigma_s$ is stably invertible and the proof is thus complete. \qed 
 \end{proof}

\section{Proof of Theorem~\ref{t:main-B}} 
\label{s:c} 
The goal of this section is to establish Theorem~\ref{t:main-B} which states that over surjunctive group universes,  every stably
injective NUCA with finite memory and uniformly bounded singularity must be stably
invertible.  
We first prove the following useful technical lemma which  will enable a reduction of  Theorem~\ref{t:main-B} to Theorem~\ref{t:main-A}.  

\begin{lemma}
\label{l:main-lemma-singular}
Let $A$ be an alphabet and let $G$ be a finitely generated group. Let $M\subset G$ be a finite subset and $S=A^{A^M}$. Suppose that $\sigma_t \circ \sigma_s=\Id_{A^G}$ for some $s,t \in S^G$ and $s$ has uniformly bounded singularity. Then 
for each $E \subset G$ finite, there exist asymptotically constant configurations $p,q\in S^G$ such that $p\vert_E = s\vert_E$, $q \vert_E= t\vert_E$, and $\sigma_q \circ \sigma_p=\Id_{A^G}$. 
\end{lemma}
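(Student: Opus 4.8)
The plan is to freeze $s$ and $t$ on a large finite window, overwrite them outside by a single pair of constant local rules, and then verify $\sigma_q\circ\sigma_p=\Id_{A^G}$ cell by cell through the local criterion of Lemma~\ref{l:compo-id}. After enlarging $M$ so that $1_G\in M$, I regard $\sigma_s$ and $\sigma_t$ as NUCA with the common memory $M$; Lemma~\ref{l:compo-id} then says that $\sigma_t\circ\sigma_s=\Id_{A^G}$ is equivalent to the family of identities $t(g)\circ\gamma_{g,M}\circ f^{+M}_{gM,\,s\vert_{gM}}\circ\gamma^{-1}_{g,M^2}=\pi$ $(g\in G)$, and that the identity at $g$ depends only on the pair $(t(g),\,s\vert_{gM})$. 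The same criterion, applied to $(p,q)$, will be the tool for checking $\sigma_q\circ\sigma_p=\Id_{A^G}$.

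First I would fix the geometry. Given the finite set $E$ of the statement, I choose a symmetric $B=B^{-1}$ with $1_G\in B$ and $E\cup M\cup M^{-1}\subseteq B$, and apply the uniformly bounded singularity hypothesis to the symmetric set $B^3$: this produces a finite $F\supseteq B^3$ and a value $c_0\in S$ with $s\vert_{FB^3\setminus F}\equiv c_0$. Taking the third power guarantees $M^{-1}M\subseteq B^2\subseteq B^3$, which yields a clean dichotomy for each $g$: if $gM\cap F\neq\emptyset$, i.e. $g\in FM^{-1}\subseteq FB$, then $gM\subseteq FM^{-1}M\subseteq FB^3$, so that $s\equiv c_0$ on $gM\setminus F$; and if $gM\cap F=\emptyset$ then $gM\subseteq G\setminus F$. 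Accordingly I set $p:=s$ on $F$ and $p:=c_0$ on $G\setminus F$ (hence $p=s$ on all of $FB^3$), and $q:=t$ on $FM^{-1}$ and $q:=\nu$ on $G\setminus FM^{-1}$, where $\nu\in S$ is a constant rule still to be produced. Both configurations are asymptotically constant, and $p\vert_E=s\vert_E$, $q\vert_E=t\vert_E$ because $E\subseteq F\subseteq FM^{-1}$.

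With these choices the verification of $\sigma_q\circ\sigma_p=\Id_{A^G}$ splits, via Lemma~\ref{l:compo-id} applied to $(p,q)$, into two pointwise cases. When $g\in FM^{-1}$ we have $p\vert_{gM}=s\vert_{gM}$ and $q(g)=t(g)$, so the criterion at $g$ is verbatim the one already holding for $(s,t)$. When $g\notin FM^{-1}$ we have $p\vert_{gM}\equiv c_0$, so the criterion becomes $\nu\circ\Psi=\pi$, where $\Psi:=\gamma_{g,M}\circ f^{+M}_{gM,\,c_0}\circ\gamma^{-1}_{g,M^2}$ is the local map of the constant CA $\sigma_{c_0}$ and is independent of $g$ by equivariance. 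Everything therefore comes down to producing a constant rule $\nu$ with $\nu\circ\Psi=\pi$, that is, a constant left inverse $\sigma_\nu\circ\sigma_{c_0}=\Id_{A^G}$ of the asymptotic constant CA.

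I expect this last point to be the main obstacle, and I would settle it geometrically. If $\langle B\rangle$ is infinite then $FB^2\supsetneq FB$, since otherwise $FB$ would be invariant under right multiplication by the infinite subgroup $\langle B\rangle$, which is impossible for a finite set; choosing a cell $g_0\in FB^2\setminus FB$ gives $g_0M\subseteq FB^3$ and $g_0M\cap F=\emptyset$, so that $g_0M$ lies entirely inside the collar and $s\vert_{g_0M}\equiv c_0$. Plugging $g_0$ into the $(s,t)$ identity and using $s\vert_{g_0M}=c_0$ produces exactly $t(g_0)\circ\Psi=\pi$, so $\nu:=t(g_0)$ does the job. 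If $\langle B\rangle$ is finite, I instead replace $F$ by the finite saturation $\tilde F:=F\langle B\rangle$, for which $\tilde F B^3=\tilde F$ makes the collar empty and leaves the far region unconstrained; there I take both the far value and the inverse rule to be the projection $\pi$, noting that $\sigma_\pi=\Id_{A^G}$ since $1_G\in M$. In either case both pointwise conditions hold at every $g$, and Lemma~\ref{l:compo-id} then delivers $\sigma_q\circ\sigma_p=\Id_{A^G}$, completing the construction.
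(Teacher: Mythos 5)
Your proposal is correct and follows essentially the same strategy as the paper's proof: freeze $s,t$ on a large window, use uniformly bounded singularity to get a collar where $s$ is constant, pick a cell $g_0$ whose whole $M$-neighbourhood lies in that collar to extract the constant inverse rule $t(g_0)$, and verify $\sigma_q\circ\sigma_p=\Id_{A^G}$ cell by cell via Lemma~\ref{l:compo-id}. The only substantive difference is that you explicitly justify the existence of such a $g_0$ (and handle the degenerate case where $\langle B\rangle$ is finite via saturation and the projection rule), a point the paper's proof passes over when it fixes $g_0\in FE^2\setminus FE$.
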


\begin{proof}
Up to enlarging $M$, we can assume that $1_G \in  M=M^{-1}$.  
Let $E\subset G$ be a finite subset. Up to enlarging $E$, we can suppose without loss of generality that $ M  \subset E =E^{-1}$ and $E$ is a finite generating set of $G$.  As $s$ has uniformly bounded singularity, there exist a constant configuration $c \in S^G$  and a finite subset $F\subset G$ containing $E^3$ such that $s\vert_{FE^3\setminus F}= c\vert_{FE^3\setminus F}$. We define an asymptotically constant configuration $p\in S^G$ by setting $p\vert_{FE}=s\vert_{FE}$ and $p\vert_{G\setminus FE}=c\vert_{G\setminus FE}$. 
Note that $ FE^2 \setminus FE \neq \varnothing$. Indeed,    if $FE^2\subset FE$ then by induction $FE^n \subset FE$ for every integer $n \geq 1$ and thus $G= \bigcup_{n\geq 1}E^n \subset \bigcup_{n\geq 1}FE^n\subset FE$ which is a contradiction since $G$ is infinite. 
Therefore, we can fix $g_0 \in FE^2 \setminus FE$ and 
  define $q \in S^G$ by  $q(g)=t(g_0)$ if $g \in G\setminus FE$ and $q(g)=t(g)$ if $g \in FE$. Then $q$ is asymptotic to the constant configuration $d \in S^G$ defined by $d(g)= t(g_0)$ for all $g \in G$. Since $1_G \in M$, we have a projection $\pi \colon A^{M^2}\to A$ given by $x\mapsto x(1_G)$. Since $E\subset E^3 \subset F \subset FE$, we deduce from our construction that $p\vert_E =s\vert_E$ and $q\vert_E =t\vert_E$. To conclude, we only need to check that $\sigma_q\circ \sigma_p=\Id_{A^G}$.
  \par 
  Let $g \in FE^2 \setminus FE$. Then $gE \subset FE^3\setminus F$ since $E=E^{-1}$.  Consequently,  
$s\vert_{gE}= c\vert_{gE}=p\vert_{gE}$ and thus $s\vert_{gM}= c\vert_{gM}=p\vert_{gM}$   since $M \subset E$. Hence, the condition $\sigma_t( \sigma_s(x))(g)=x(g)$ for all $x \in A^G$ is equivalent to $t(g) \circ f^{+M}_{M, c\vert_M} = \pi$ by Lemma~\ref{l:compo-id}. Similarly,  the condition $\sigma_q( \sigma_p(x))(g)=x(g)$ for all $x \in A^G$  amounts to $q(g) \circ f^{+M}_{M, c\vert_M} = \pi $. Since $q(g)=t(g_0)$ and $\sigma_t\circ \sigma_s=\Id_{A^G}$, we conclude from the above discussion that $\sigma_q( \sigma_p(x))(g)=x(g)$ for all $x \in A^G$. 
 \par 
 Let  $g \in  FE$. 
 Since $s\vert_{FE^3\setminus F}= c\vert_{FE^3\setminus F}$,   $p\vert_{FE}=s\vert_{FE}$, and $p\vert_{G\setminus FE}=c\vert_{G\setminus FE}$ by construction, we have  $p\vert_{FE^3}=s\vert_{FE^3}$. In particular, $p\vert_{gM}=s\vert_{gM}$ since $gM \subset (FE)E=FE^2 \subset FE^3$.  Therefore, we can infer from the relations  $\sigma_t\circ \sigma_s=\Id_{A^G}$ and  $q(g)=t(g)$  that $\sigma_q( \sigma_p(x))(g)=x(g)$ for all $x \in A^G$.
 \par 
 Finally, let $g \in G \setminus FE^2$. Since $p\vert_{G\setminus FE}=c\vert_{G\setminus FE}$,  $q\vert_{G\setminus FE}=d\vert_{G\setminus FE}$, and since $c,d$ are constant, we deduce that $q(g)= d(g_0)=t(g_0)$ and $p\vert_{gM}= c\vert_{gM}$. 
 The condition $\sigma_q( \sigma_p(x))(g)=x(g)$ for all $x \in A^G$ is thus  equivalent to $t(g_0) \circ f^{+M}_{M, c\vert_M} = \pi$ by Lemma~\ref{l:compo-id}. But since $\sigma_t\circ\sigma_s=\Id_{A^G}$ and $s\vert_{g_0M}=c\vert_{g_0M}$, another application of  Lemma~\ref{l:compo-id} shows that $t(g_0) \circ f^{+M}_{M, c\vert_M} = \pi$. 
 Thus, $\sigma_t( \sigma_s(x))(g)=x(g)$ for all $x \in A^G$. 
Therefore, we conclude that $\sigma_q\circ \sigma_p=\Id_{A^G}$. The proof is complete.  \qed 
\end{proof}

We are now in position to prove Theorem~\ref{t:main-B}. 
\par 
\vspace{0.3cm}
\noindent 
\textit{Proof of Theorem~\ref{t:main-B}}. 
    Since $\sigma_s$ is stably injective by hypothesis, we deduce from \cite[Theorem~A]{phung-tcs} that there exist a finite subset $N \subset G$ and $t \in T^G$, where $T=A^{A^N}$, such that $\sigma_t\circ \sigma_s= \Id_{A^G}$. Up to enlarging $M$ and $N$, we can assume that $1_G\in M=N$ and thus $S=T$. By Corollary~\ref{r:remark-stably-invertible}, it suffices to show that $\sigma_s$ is surjective to conlude that $\sigma_s$ is stably invertible.  We suppose on the contrary that $\sigma_s$ is not surjective. Since $\Gamma=\sigma_s(A^G)$ is closed in $A^G$ with respect to the prodiscrete topology by \cite[Theorem~4.4]{phung-tcs}, there must exist a nonempty finite subset $E \subset G$ such that $\Gamma_{E} \subsetneq A^E$. 
    Since $s$ has uniformly bounded singularity, we infer from Lemma~\ref{l:main-lemma-singular} that there exist asymptotically constant configurations $p,q\in S^G$ such that $p\vert_{E} = s\vert_{E
    }$, $q \vert_{E}= t\vert_{E}$, and $\sigma_q \circ \sigma_p=\Id_{A^G}$. Let $\Lambda=\sigma_p(A^G)$. Then it follows from $p\vert_{E} = s\vert_{E}$ that $\Lambda_E=\Gamma_E\subsetneq A^E$. We deduce that $\sigma_p$ is not surjective. In particular, $\sigma_p$ is not invertible. On the other hand, the condition $\sigma_q \circ \sigma_p=\Id_{A^G}$ shows that $\sigma_p$ is stably injective by \cite[Theorem~A]{phung-tcs}. We can thus apply Theorem~\ref{t:main-A} to deduce that $\sigma_p$ is invertible and thus surjective. Therefore, we obtain a contradiction and the proof is complete. \qed

\section{Proof of Theorem~\ref{t:main-D}}
\label{s:d}
In this section, we prove that 
stably post-surjective NUCA with finite memory and  uniformly bounded singularity is stably invertible whenever the group universe is post-injunctive. 
The following proof of Theorem~\ref{t:main-D}   generalizes the proof of  Theorem~\ref{t:main-A} and Theorem~\ref{t:main-C}.    
\begin{proof}
We will show that $\sigma_s$ is injective. Suppose on the contrary that there exist an element $h \in G$ and configurations $u,v \in A^G$ such that  $\sigma_s(u)= \sigma_s(v)$ but $u(h)\neq v(h)$.     
Up to enlarging $M$,   
we can suppose without loss of generality that $M=M^{-1}$ (that is,  $M$ is symmetric) and $1_G, h \in M$. When $G$ is finite, $\sigma_s$ is trivially invertible as a surjective selfmap of a finite set. Hence, we assume in what follows that the group $G$ is infinite. 
\par 
Let $c_1, ..., c_n \in S^G$ be all the constant configurations in $S^G$ where $n=|S|$. Let $\Delta \subset G$ be a finite  generating set of $G$ such that $1_G \in \Delta= \Delta^{-1}$ (that is, $\Delta$ is symmetric) and $M \subset \Delta$. Up to enlarging $M$, we can suppose that $M=\Delta$. For each $k \geq 1$, let $E_k=\Delta^k$ then $E_k$ is  a finite symmetric subset of $G$ containing $1_G$ and we have an exhaustion $G= \bigcup_{k =1}^\infty E_k$. 
Since $s$ has uniformly bounded singularity, we can find a finite subset  $F_k \subset G$  such that $E_k^5\subset F_k$ and  $s\vert_{F_k E^5_k \setminus F_k} \in S^{F_k E^5_k \setminus F_k}$ is constant for every $k \geq 1$. Since $S$ is finite, there exist $m \in \{1,2,..., n\}$ and an infinite sequence $1\leq k_1<k_2<k_3<\dots$ of integers such that for $c=c_m$, we have  $ E^5_{k_i}\subset F_{k_i}$ for every $i \geq 1$ and 
\begin{align}
\label{e:proof-main-B1-1}
s\vert_{F_{k_i} E^5_{k_i} \setminus F_{k_i}} = c\vert_{F_{k_i} E^5_{k_i} \setminus F_{k_i}}.
\end{align} 
\par 
For every $i \geq 1$, we claim that $F_{k_i} E^2_{k_i} \setminus F_{k_i}E_{k_i}\neq \varnothing$. Indeed, we would have otherwise $F_{k_i} E^2_{k_i} \subset  F_{k_i} E_{k_i}$. Hence, $F_{k_i} E^r_{k_i} \subset F_{k_i} E_{k_i}$  by induction on $r\geq 2$ and 
$$G=F_{k_i}  G= F_{k_i}  \bigcup_{r \geq 2} E^r_{k_i}  = \bigcup_{r \geq 2} F_{k_i}  E^r_{k_i}  \subset F_{k_i} E_{k_i}$$
which is a contradiction since $G$ is infinite. Therefore, for every $i\geq 1$, we can fix some $g_i \in F_{k_i} E^2_{k_i} \setminus F_{k_i}E_{k_i}$. As $1_G \in E_{k_i}$ and $E_{k_i}$ is symmetric, it follows that 
\begin{align}
\label{e:proof-main-B1-2}
g_i E_{k_i}\subset   F_{k_i} E^3_{k_i} \setminus F_{k_i}. 
\end{align}  
 
\par 
It follows from \eqref{e:proof-main-B1-1} and \eqref{e:proof-main-B1-2} that $c\vert_{g_iE_{k_i}} = s\vert_{g_iE_{k_i}}$. Since  $G=\bigcup_{i=1}^\infty E_{k_i}$ and $\Sigma(s) = \overline{\{gs \colon g \in G\}} \subset S^G$, we deduce that $c\in \Sigma(s)$. We thus infer from the stable post-surjectivity of $\sigma_s$ that  $\sigma_c\colon A^G \to A^G$ is a post-surjective CA. Hence, $\sigma_c$ is pre-injective by the post-injunctivity of the group universe $G$. Theorem~\ref{t:main-B} then implies that $\sigma_s$ is an invertible CA. Therefore, there exists a CA $\tau\colon A^G \to A^G$ such that 
\begin{equation}
    \label{e:proof-d-2}
\tau\circ \sigma_c=\sigma_c\circ \tau=\Id_{A^G}.
\end{equation} 
Without loss of generality, we can assume that $M$ is also a memory set of $\tau$ up to enlarging $M$. Thus $\tau=\sigma_d$ for some constant configuration $d\in S^G$.  
\par 
 
From \cite[Theorem~6.2]{phung-tcs} we obtain  a configuration $q\in Q^G$, where $Q=A^{A^{M^2}}$, such that $\sigma_d \circ \sigma_s=\sigma_q$.  
Let $\pi \colon A^{M^2} \to A^{\{1_G\}}$ denote the canonical projection $z \mapsto z(1_G) $ and fix  $j\geq 1$ large enough such that $M^2 \subset E_{k_j}$.  
We claim that 
\begin{align}
\label{e:proof-d-10}
q\vert_{F_{k_j}E^4_{k_j}\setminus F_{k_j}E_{k_j}}=\pi^{F_{k_j}E^4_{k_j}\setminus F_{k_j}E_{k_j}}.
\end{align}
Indeed,  let  $g \in F_{k_j}E^4_{k_j}\setminus F_{k_j}E_{k_j}$. Then  $ gE_{k_j}\subset F_{k_j}E^5_{k_j}\setminus F_{k_j}$ as $E_{k_j}$ is symmetric. Hence, $gM\subset gE_{k_j}\subset F_{k_j}E^5_{k_j}\setminus F_{k_j}$. It follows that  $s\vert_{gM} = c\vert_{gM}$ by \eqref{e:proof-main-B1-1} and thus $
g^{-1}s \vert_{M}=g^{-1}c \vert_{M}$. Since  $\sigma_d\circ \sigma_c=\Id_{A^G}$ by \eqref{e:proof-d-2}, the claim \eqref{e:proof-d-10} follows since  
(see  the proof of  \cite[Theorem~6.2]{phung-tcs}): 
$$
q(g) = d(g) \circ f^{+M}_{M, g^{-1}s\vert_{M}} = d(g) \circ f^{+M}_{M, g^{-1}c\vert_{M}} = \pi.
$$ 
Therefore, as in the proof of Theorem~\ref{t:main-A}, we deduce from \eqref{e:proof-d-10}  that  
\begin{equation}
\label{e:decomposition-last}
\sigma_q = \Phi \times \left ( \Id_{A^{F_{k_j}E^3_{k_j}\setminus F_{k_j}E^2_{k_j}}} \right)  \times \Psi 
\end{equation}
where  $\Phi\colon A^{F_{k_j}E^2_{k_j}} \to A^{F_{k_j}E^2_{k_j}}$ and $\Psi\colon A^{G\setminus F_{k_j}E^3_{k_j}} \to A^{G\setminus F_{k_j}E^3_{k_j}}$ are well-defined maps given by the following formula:  
\begin{enumerate}[\rm (i)]
    \item 
$ 
\Phi(x)=\sigma_q(y)\vert_{F_{k_j}E^2_{k_j}} 
$ 
for  $x \in A^{F_{k_j}E^2_{k_j}}$ and   $y \in A^G$ with $y\vert_{F_{k_j}E^2_{k_j}}=x$, 
\item 
$ 
\Psi(x)=\sigma_q(y)\vert_{G\setminus F_{k_j}E^3_{k_j}} 
$ 
for  $x \in A^{G\setminus F_{k_j}E^3_{k_j}}$ and  $y \in A^G$ with  $y\vert_{G\setminus F_{k_j}E^3_{k_j}}=x$.
\end{enumerate}
\par 
 Recall the choice of the configurations $u,v\in A^G$ in the first paragraph of the proof with $u(h)\neq v(h)$ and $\sigma_s(u)=\sigma_s(v)$ for some $h \in M \subset F_{k_j}E^2_{k_j}$. It follows that $u\vert_{F_{k_j}E^2_{k_j}} \neq v\vert_{F_{k_j}E^2_{k_j}}$ and 
 \begin{align*}
\Phi\left(u\vert_{F_{k_j}E^2_{k_j}}\right) = \sigma_q(u)\vert_{F_{k_j}E^2_{k_j}} & = \sigma_d\left(\sigma_s(u)\right)\vert_{F_{k_j}E^2_{k_j}} \\& =  \sigma_d\left(\sigma_s(v)\right)\vert_{F_{k_j}E^2_{k_j}}= \sigma_q(v)\vert_{F_{k_j}E^2_{k_j}}=\Phi\left(v\vert_{F_{k_j}E^2_{k_j}}\right). 
 \end{align*}
We deduce that $\Phi$ is not injective and thus it is not surjective as a selfmap of the finite set $A^{F_{k_j}E^2_{k_j}}$. Consequently, \eqref{e:decomposition-last} implies that $\sigma_q$ is not surjective. We arrive at a contradiction since $\sigma_q=\sigma_d\circ \sigma_s$ is surjective as a composition of surjective maps. Hence,  $\sigma_s$ must be injective. By Corollary \ref{c:post-surj-invertible}, we conclude that $\sigma_s$ is stably invertible and the proof is complete. \qed
\end{proof}

 \section{Proof of Theorem~\ref{t:finite-orbit}}

 \label{s:finite-orbit}
 
 To recall the notations, let $M \subset G$ be a finite subset. Let $A$ be a finite alphabet and let $S=A^{A^M}$. Let $s\in S^G$ be a configuration of local transition maps with finite orbit. 
Since the orbit $Gs=\{gs\colon g \in G\}$ is finite, we have  $\Sigma(s)=Gs$. For every $g\in G$, we know that $\sigma_{gs}$ is injective, resp. post-surjective, resp. invertible, if and only if so is $\sigma_s$ (see \cite[Lemma~5.1]{phung-tcs}). Therefore, $\sigma_s$ is stably injective, resp. stably post-surjective, resp. stably invertible, if and only if it is  injective, resp. post-surjective, resp. invertible.
\par 
Let $H=\mathrm{Stab}_G(s)\subset G$ be the stabilizer subgroup of $s$ in $G$. Note that $|H\backslash G|= |Gs|$ so that $H$ is a subgroup of finite index in $G$.     
Let $B=A^{H\setminus G}$. We will construct a CA $\tau\colon B^H \to B^H$ induced by $\sigma_s$ as follows. 
Fix a complete set of representatives $g_1,\dots, g_n \in G$ of right cosets of $H$ in $G$, that is, $H\backslash G= \{Hg_1, \dots, Hg_n\}$. 
For every $x \in B^H$, we define  $\phi_H(x)\in A^G$ by setting  \begin{align} 
\label{e:phi-H}
\phi_H(x)(g)=x(h)(Hg_i)\in A
\end{align}
 for every $g \in G$ where we write $g=hg_i$ for some unique $h \in H$ and $i \in \{1,\dots, n\}$. The map $\phi_H \colon B^H \to A^G$ is  $H$-equivariant  since for every $x\in B^H$,  $h, k \in H$, and $g = hg_i \in G$  with $i \in\{1,\dots, n\}$, we have by definition of $\phi_H$ that  
\begin{align*}
    \phi_H(kx)(g)  & = (kx)(h)(Hg_i) = x(k^{-1}h)(Hg_i)  \\ & =\phi_H(x)(k^{-1}h g_i) = ( k \phi_H(x))(h g_i) \\
   & = (k\phi_H(x))(g)
\end{align*} 
and thus $\phi_H(k x)=k\phi_H(x)$ for all $x\in B^H$ and $k \in H$. The map $\phi_H$ is also a bijection. Indeed, for every $y \in A^G$, let $\psi_H \colon A^G\to B^H$ be the map defined by 
\begin{align} 
\label{e:psi-H}
\psi_H(y)(h)(Hg_i) = y(hg_i)
\end{align}
for every $y \in A^G$, $h\in H$, and   $i \in \{1,\dots, n\}$. It can be checked directly  that  $$\psi_H\circ \phi_H= \Id_{A^G} \quad \text{ and }\quad  \psi_H\circ \phi_H= \Id_{B^H}.$$  

\par 
Consider the map  $\tau \colon B^H \to B^H$ defined for all $x \in B^H$  by  $\tau = \psi_H\circ \sigma_s \circ \phi_H$. 
Let  $x\in B^H$ and $y = \phi_H(x)$. Then  $\tau(x)= \psi_H(\sigma_s(y))$. Fix  $h \in H$  and $i \in \{1,\dots, n\}$. Then we have $s= h^{-1}s$ since $h^{-1}\in H = \mathrm{Stab}_{G} (s)$. In particular, $s(g_i)=(h^{-1}s)(g_i) = s(hg_i)$. We find that 
\begin{align*}
   \tau(x)(h)(Hg_i)  &  = \psi_H(\sigma_s(y))(h) (Hg_i) & \\
   & =  \sigma_s(y) (hg_i) & \text{(by } \eqref{e:psi-H}) \\& 
   = s(hg_i) (((hg_i)^{-1}y) \vert_{M}) & \text{(by Definition }\ref{d:most-general-def-asyn-ca})\\
   & = s(g_i) (((hg_i)^{-1}y) \vert_{M}) \quad & \text{(as } s(hg_i)=s(g_i))\\
   & =  s(g_i) ((g_i^{-1}h^{-1}\phi_H(x))) \vert_{M}) & \text{(as } y= \phi_H(x))\\
   & = s(g_i) ((g_i^{-1} \phi_H(h^{-1}x)) \vert_{M})  &  \text{(as }\phi_H \text{ is }H\text{-equivariant})
\end{align*}
For every $g \in M$,  we can write $g_ig=h_{g_ig} g_{j_{g_ig}}$ for some unique $j_{g_ig} \in \{1,\dots, n\}$ and $h_{g_ig}\in H$. Consider the finite subset $N= \{h_{g_ig}: 1\leq i\leq n, \, g \in M\} \subset H$. By \eqref{e:phi-H}, we have  for  all $g \in M$ that  $$(g_i^{-1} \phi_H(h^{-1}x)(g)= \phi_H(h^{-1}x)(g_ig)=(h^{-1}x)(h_{g_ig})(Hg_{j_{g_ig}}). 
$$
Hence, $(g_i^{-1} \phi_H(h^{-1}x))\vert_M$ depends only on $(h^{-1}x)\vert_N$. Since  $$ \tau(x)(h)(Hg_i)  = s(g_i) ((g_i^{-1} \phi_H(h^{-1}x)) \vert_{M})$$ for all $x\in B^H$, $h \in H$ and $i \in \{1,\dots, n\}$  and $H\backslash G= \{Hg_1,\dots, Hg_n\}$, it follows that $\tau(x)(h)\in B= A^{H\backslash G}$  depends only on $(h^{-1}x)\vert_{N}$.  Consequently, $\tau \colon B^H \to B^H$ is a CA with finite memory  $N\subset H$. To summarize, we have  the following commutative diagram of $H$-equivariant maps where the vertical map $\phi_H$ is bijecive: 
\[
\begin{CD}
B^{H} @>{\tau}>> B^{H}   \\
@V{\phi_H}VV @VV{\phi_H}V \\
A^G @>{\sigma_s}>> A^G
\end{CD}
\] 
 
\par 
For (i), suppose that  $\sigma_s$ is injective and $G$ is a surjunctive group.  As mentioned in the beginning of the proof, $\sigma_s$ is stably injective  as it is injective and $\Sigma(s)=Gs$. By \cite[Theorem~A]{phung-tcs}, we deduce that $\sigma_s$ is  reversible. It follows from the diagram that the CA $\tau$ is  injective. As a subgroup of a surjunctive group, $H$ is also  
surjunctive. Therefore,   $\tau$ must be surjective and we deduce again from the diagram  that  $\sigma_s$ is  surjective. As a bijective and reversible NUCA with finite memory,  $\sigma_s$ is invertible (and in fact stably invertible by Lemma~\ref{l:equivalent-stable-invertible}). The proof of (i) is thus complete. 
\par 
For (ii), suppose that the group $G$ is post-injunctive and $\sigma_s$ is post-surjective. It follows at once from the above diagram and the definition of post-surjectivity that $\tau$ is also post-surjective. We remark that subgroups of a post-injunctive are also post-injunctive. Therefore, $H$ is post-injunctive and thus $\tau$ must be pre-injective. 
We deduce that $\tau$ is  invertible (see e.g. \cite[Theorem~13.4]{phung-tcs}) and in particular, it is injective. It follows from the above diagram   that $\sigma_s$ is also injective. As $\Sigma(s)=Gs$, we deduce  that $\sigma_s$ is stably injective and thus reversible. As $\sigma_s$ is post-surjective, it is also surjective. We can conclude that $\sigma_s$ is invertible.  The proof of Theorem~\ref{t:finite-orbit} is complete. 
\qed 

\section{A more constructive proof of Theorem~\ref{t:main-C} for finitely generated sofic group universes} 
\label{s:another-proof}

Our main purpose in this expository section is to present a more self-contained and constructive proof of Theorem~\ref{t:main-C} in the case when the  universe $G$ is a finitely generated sofic group. As mentioned in the Introduction, sofic groups remain the only exclusive known examples of groups for which Gottschalk's surjunctivity conjecture and its dual conjecture hold true. Our proof of Theorem~\ref{t:main-C} for local perturbations of CA over sofic group universes is based on the Cayley graph representation of  finitely generated sofic groups. As such, our proof generalizes the invertibility proof obtained in  \cite{kari-post-surjective} for post-surjective CA also over sofic groups. We will first recall the definition and basic properties of sofic groups in Section~\ref{s:sofic-groups} to prepare for the proof of Theorem~\ref{t:main-C} given in Section~\ref{s:another-proof-B-sofic}. Note that by a similar proof strategy, we can also obtain a more constructive proof of Theorem~\ref{t:main-A} in the case of finitely generated sofic group universes.  

\subsection{Preliminaries on finitely generated sofic groups}
\label{s:sofic-groups}

Let $\Delta$ be a finite set. We define a   $\Delta$-labeled graph as a pair $\GG= (V,E)$  
where $V$ is the set of {vertices} 
and $E \subset V \times \Delta \times V$ is the set of $\Delta$-labeled edges. 
The length of a path $\rho$ in $\GG$ is denoted by $l(\rho)$.  
For $v, w\in V$, we set   $d_\GG(v,w)= \min \{ l(\rho): \text{$\rho$ is a path from $v$ to $w$}\}$ if  $v$ and $w$  are connected by a path, and 
$d_\GG(v,w)= \infty$ 
otherwise. 
For $v\in V$ and $r \geq 0$, the ball $B_\GG(v,r)$ of radius $r$ centered at $v$  in $\GG$ 
\[
B_\GG(v,r)= \{w\in V: d_\GG (v,w) \leq r\}
\]
is a $\Delta$-labeled subgraph of $\GG$. 
Let $(V,E)$ and $(W,F)$ be $\Delta$-labeled graphs. 
A map $\phi \colon V \to W$ satisfying   $(\phi(v),s, \phi(w)) \in F$ for all $(v,s,w)\in E$ is called  a $\Delta$-labeled graph homomorphism 
from $(V, E)$ to $(W,F)$. 
A bijective $\Delta$-labeled graph homomorphism $\phi \colon V \to W$ is a 
$\Delta$-labeled graph isomorphism if its inverse  $\phi^{-1}\colon W \to V$ 
is a $\Delta$-labeled graph homomorphism. 
\par
Let $G$ be a finitely generated group and let $\Delta\subset G$ 
be a finite symmetric generating subset, i.e., $\Delta=\Delta^{-1}$. 
The Cayley graph of $G$ with respect to $\Delta$ is the connected $\Delta$-labeled graph $C_\Delta(G) = (V,E)$ 
where $V = G$ and $E=\{(g,s,gs): g\in G \text{ and } s \in \Delta)\}$.  
For $r\geq 0$, we denote 
\[
B_\Delta(r)= B_{C_\Delta(G)}(1_G,r).   
\]  
\par 
\noindent
We have the following characterization of finitely generated sofic groups in terms of Cayley graphs due to Weiss (\cite{weiss-sgds}, see also \cite[Theorem~7.7.1]{csc-book}). 

\begin{theorem}
\label{t:sofic-character}
Let $G$ be a finitely generated group. 
Let $\Delta\subset G$ be a finite symmetric generating subset. 
Then the following are equivalent: 
\begin{enumerate} [\rm (a)]
\item
the group $G$ is sofic;
\item
for all $r, \varepsilon >0$, there exists a finite $\Delta$-labeled graph $\GG=(V,E) $ 
satisfying 
\begin{equation*} 
\vert V(r) \vert \geq (1 - \varepsilon) \vert V \vert,
\end{equation*}
where $V(r)\subset V$ consists of $v \in V$ such that there exists a unique $S$-labeled graph
isomorphism $\psi_{v,r} \colon  B_\GG(v,r)\to B_\Delta(r)$ with $\psi_{v,r}(v) = 1_G$. 
\end{enumerate}
\end{theorem}
\noindent
Note that if $0 \leq r\leq s$ then $V(s) \subset V(r)$ since every $\Delta$-labeled graph isomorphism $\psi_{v,s} \colon  B_\GG(v,s) \to B_\Delta(s)$ induces by restriction  
a $\Delta$-labeled graph isomorphism $ B_\GG(v,r) \to B_\Delta(r)$. 
We will also need the  following Packing lemma  (\cite{weiss-sgds}, see also \cite[Lemma~7.7.2]{csc-book}).  

\begin{lemma}
\label{l:sofic-B-V}
With the notation as in Theorem~\ref{t:sofic-character}, the following hold 
\begin{enumerate}[\rm(i)]
\item
$B_\GG(v,r) \subset V(kr)$ for all $v \in V((k+1)r)$ and $k \geq 0$;
\item
There exists a finite subset $W \subset V(3r)$ such that the balls $B_\GG(w,r)$ 
are pairwise disjoint for all $w\in W$ and that 
$V(3r) \subset \bigcup_{w\in W} B_\GG(w,2r)$. 
\end{enumerate}
\end{lemma}

\subsection{Another proof of Theorem~\ref{t:main-C} for sofic group universes}  

\label{s:another-proof-B-sofic} 
To recall the notations, let $G$ be a finitely generated sofic group and let $A$ be a finite alphabet. Let $M\subset G$ be a finite subset and $S=A^{A^M}$. Let $s \in S^G$ be  asymptotically constant such that $\sigma_s$ is stably post-surjective. We will first prove that $\sigma_s$ is pre-injective.  The theorem is trivial when $\vert A \vert \leq 1$ so we only need to consider the case  $\vert A \vert \geq 2$.   
\par  
Let $\Delta$ be a symmetric generating set of $G$, i.e., $\Delta=\Delta^{-1}$, such that $1_G \in \Delta$. 
By hypothesis, we can find a constant configuration $c \in S^G$ and a finite subset $F\subset G$   such that $s\vert_{G \setminus F} =  c\vert_{G \setminus F}$.   Since $\sigma_s$ is stably post-surjective, we infer from  \cite[Lemma~13.2]{phung-tcs}  a finite subset $E \subset G$  such that for all $g \in G$ and  $x, y\in A^G$ with $y\vert_{G \setminus \{g\}} =\sigma_s(x)\vert_{G \setminus \{g\}}$, there exists $z \in A^G$ such that $\sigma_s(z)=y$ and 
$z\vert_{G \setminus gE}= x\vert_{G \setminus gE}$. 
Suppose on the contrary that $\sigma_s$ is not  
pre-injective. Then  there exist distinct asymptotic configurations $p, q \in A^G$ with  $\sigma_s(p)= \sigma_s(q)$. 
Up to enlarging $M$ and $F$, we can suppose that  $p\vert_{FM} \neq q\vert_{FM}$. 
We fix $r \in \N$ such that $r \geq 1$ and   
$B_\Delta(r) \supset M \cup E\cup F$ where $B_\Delta(r) \subset G$ denotes the ball of radius $r$ centered at $1_G$   in the Cayley graph $C_\Delta(G)$ of $G$.  
Up to enlarging $M$, $E$, and $F$ again if necessary,  we can suppose without loss of generality  that 
$$M=E=F=B_\Delta(r).$$  
We infer from \eqref{e:induced-local-maps},  \eqref{e:induced-local-maps-general}, and the relation $\sigma_s(p)= \sigma_s(q)$  
that: 
\begin{align} 
\label{e:mutually-erasable-pattern}
   f_{F,s\vert_F}^{+M}(p\vert_{FM}) = f_{F,s\vert_F}^{+M}(q\vert_{FM}).
\end{align}
\par 
\noindent 
Let $R=4r$ and let  $\varepsilon \in (0, \frac{1}{2})$ be small enough
  so that 
  \begin{equation}
  \label{e:post-surjective-var-epsilon-3-1}
    \vert A \vert^{\varepsilon} \left( 1 - \vert A \vert^{-|B_\Delta(R)|} \right)^{\frac{1}{2|B_\Delta(2R)|}}
    < 1,
  \end{equation}
\par 
\noindent 
By Theorem~\ref{t:sofic-character}, there exists a finite $\Delta$-labeled graph $\GG=(V,E)$ 
 such that
\begin{equation} 
\label{e:sofic-V-2post-surjective}
    \vert V(3R) \vert \geq (1 - \varepsilon) \vert V \vert \geq (1 - \varepsilon)\vert V(R) \vert
\end{equation}
where
  $V(n)$ consists   of $v \in V$ such that there exists a $\Delta$-labeled graph
isomorphism $\psi_{v,n}\colon  B_{\GG}(v,n) \to B_\Delta(n)$ mapping 
 $v$ to $1_G$. 
 \par 
In what follows, we  construct  an auxiliary surjective  map $\Phi \colon A^{V(R)} \to A^{V(3R)}$ which captures various  induced local maps of $\sigma_s$ via the the isomorphisms $\psi_{v,r}$ where  $v \in V(3R)$. 
By Lemma~\ref{l:sofic-B-V}.(ii), there exists a subset $W \subset V(3R) $ such that the balls 
$B_\GG(w,R)$, for all $w\in W$,  are pairwise disjoint  and $V(3R) \subset \bigcup_{w \in W}B_\GG(w,2R)$. In particular,  
\begin{equation}
\label{e:v'-and-v-post-surjective-3}
    \vert V(3R) \vert \leq \vert W \vert \vert B_\Delta(2R) \vert.
\end{equation}
\noindent 
Let 
$\overline{W} = \coprod_{w \in W} B(w,R) \subset V(2R)$. 
For $x \in A^{V(R)}$ and $v \in V(3R)$, we define  
\begin{align}
\label{eq:Phi-map-dual}
\Phi(x)(v) 
=
\begin{cases}
    s(\psi^{-1}_{w,R}(v)) (x\vert_{B(v,r)}\circ \psi_{v,r}) & \text{ if } v\in B(w,R) \text{  for some } w \in W\\
    c(1_G)\left(x\vert_{B(v,r)} \circ \psi_{v,r}\right) & \text{ if } v \in V(3R) \setminus \overline{W}.  
\end{cases}
\end{align}
\par 
\noindent
For $v\in B(w,R)\cap V(3R)$ where $w \in W$, we have for  $g= \psi^{-1}_{w,R}(v) \in B_\Delta(R)$ and for all  $x \in A^{V(R)}$ that: 
\begin{align}
\label{eq:Phi-equal-sigma-s-locally-1-dual}
    \Phi(x)\vert_{B(v, r)} \circ \psi_{v,r} \circ \gamma_{g,B_\Delta (r)} = f_{gB_\Delta (r), s\vert_{ g B_\Delta (r)}}^{+B_\Delta (r)}(x\vert_{B(v,2r)} \circ \psi_{v,2r}\circ \gamma_{g,B_\Delta (2r)}).
\end{align}
 
\par 
\noindent
For $v\in V(3R) \setminus \overline{W}$ and  $x \in A^{V(R)}$, observe that     
\begin{align}
\label{eq:Phi-equal-sigma-s-locally-2-dual}
   \Phi(x)\vert_{B(v, r)} \circ \psi_{v,r} =  f_{B_\Delta (r), s\vert_{B_\Delta (r)}}^{+B_\Delta (r)}(x\vert_{B(v,2r)} \circ \psi_{v,2r}). 
\end{align}
 
\par 
\noindent
Let $y \in A^{V(3R)}$ and consider an arbitrary pattern $x \in A^{V(R)}$. Let $v_1,..., v_n$ be elements of $V(3R)$. 
We describe inductively below a procedure on $1\leq i\leq n$ which allows us to obtain a sequence $x_0=x, x_1,  ..., x_n$ such that
 $$\Phi(x_i)\vert_{\{v_1, ..., v_i\}}=y\vert_{\{ v_1, ..., v_i\}}, \quad  i=1,..., n.$$ 
 For $i=1,..., n$, let $g_i= \psi^{-1}_{w,R}(v_i)$ if $v_i \in B(w,R)\cap V(3R)$ for some $w \in W$ and $g_i= 1_G$ otherwise. 
By \eqref{eq:Phi-equal-sigma-s-locally-1-dual}, \eqref{eq:Phi-equal-sigma-s-locally-2-dual}, and the choice of $E$,   we can modify, via  the isomorphism $\psi_{v, r}\circ \gamma_{g_i, B_\Delta(r)}$,  the restriction  $x_{i-1}\vert_{B(v_{i}, r)}$ to obtain a new configuration $x_{i}\in A^{V(R)}$ such that  $x_{i}\vert_{V(R)\setminus B(v_{i}, r)}= x_{i-1}\vert_{V(R)\setminus B(v_{i}, r)}$ and 
$$\Phi(x_{i})\vert_{V(3R)\setminus \{v_{i}\}}= \Phi(x_{i-1})\vert_{V(3R)\setminus \{v_{i}\}}, \quad \quad \Phi(x_{i})(v_{i})= y(v_{i}). 
$$
Consequently, $\Phi(x_n)= y$ and we deduce that   $\Phi$ is surjective. In particular,    
  \begin{equation} 
  \label{e:surj-post-phi-surjective-v-3r}
  \vert \Phi(A^{V(R)})\vert = \vert A^{V(3R)}\vert .
  \end{equation}
  \par 
  \noindent 
By  \eqref{e:mutually-erasable-pattern},  we have    
$f_{B_\Delta(r),s\vert_{B_\Delta(r)}}^{+B_\Delta (r)}(p\vert_{B_\Delta(2r)}) = f_{B_\Delta(r),s\vert_{B_\Delta(r)}}^{+B_\Delta (r)}(q\vert_{B_\Delta(2r)})$. Let 
$$\Lambda = \prod_{w \in W}\psi_{w,R}^{-1}(\{p\vert_{B_\Delta(R)}, q\vert_{B_\Delta(R)}\}) \subset A^{\overline{W}}.$$ 
Then for 
every $e \in A^{V(R) \setminus {\overline{W}}}$ and all $x,y \in \Lambda$, we deduce from \eqref{eq:Phi-map-dual} and $R=4r$ that $ 
    \Phi(e \times x) =  \Phi(e \times y)$. 
Consequently,  
\[
\Phi(A^{V(R)}) = \Phi\left(A^{V(R) \setminus {\overline{W}}}\times \prod_{w \in W} \left( A^{B(w,R)}\setminus \psi_{w,R}^{-1}(q\vert_{B_\Delta(R)})\right)\right). 
\]
Therefore,  as $|B(w,R)|=|B_\Delta(R)|$ for all $w \in W$, we obtain the estimation 
\begin{align}
\label{e:surj-post-phi-surjective-v-3r-c}
     \vert \Phi(A^{V(R)})\vert 
     & \leq \left|A^{V(R) \setminus {\overline{W}}}\times \prod_{w \in W} \left( A^{B(w,R)}\setminus \psi_{w,R}^{-1}(q\vert_{B_\Delta(R)})\right)\right|\\
     & \leq \vert A \vert^{\vert V(R) \setminus {\overline{W}}\vert} (\vert A \vert^{\vert B_\Delta(R)\vert}-1)^{\vert W\vert}. \nonumber 
\end{align}
Finally, we find that 
  \begin{align*}
    \vert A\vert^{\vert {V(3R)}\vert } &  =  \vert \Phi(A^{V(R)})\vert  & \text{(by } \eqref{e:surj-post-phi-surjective-v-3r}) \\ 
    & \leq 
   \vert A \vert^{\vert V(R) \vert - \vert{\overline{W}}\vert} (\vert A \vert^{\vert B_\Delta(R)\vert}-1)^{\vert W\vert}  
    & \text{(by } \eqref{e:surj-post-phi-surjective-v-3r-c}) 
    \\
    & = 
   \vert A \vert^{\vert V(R) \vert} (1 - \vert A \vert^{- \vert B_\Delta(R)\vert})^{\vert W\vert}  
    &  \text{(as } |\overline{W}|=|B_\Delta(R)| |W|) 
    \\ 
    & \leq 
     \vert A \vert^{\vert V(R) \vert} (1 - \vert A \vert^{- \vert B_\Delta(R)\vert})^{\frac{\vert V(3R)\vert}{  \vert B_\Delta(2R) \vert}} & \text{(by }\eqref{e:v'-and-v-post-surjective-3}) \\
      &  \leq 
     \vert A \vert^{\vert V(R) \vert} (1 - \vert A \vert^{- \vert B_\Delta(R)\vert})^{\frac{\vert V(R)\vert}{2  \vert B_\Delta(2R) \vert}}
      & (\text{by }\eqref{e:sofic-V-2post-surjective})
      \\
      &  < 
     \vert A \vert^{\vert V(R) \vert}  \vert A \vert^{- \varepsilon \vert V(R) \vert}
      & (\text{by }\eqref{e:post-surjective-var-epsilon-3-1}) \\
      & \leq \vert A \vert^{\vert V(3R)\vert} & \text{(by } \eqref{e:sofic-V-2post-surjective}) 
\end{align*} 
  which is a contradiction. Hence, $\sigma_s$ must be pre-injective. Since $\sigma_s$ is also stably post-surjective by hypothesis, we conclude from \cite[Theorem~13.4]{phung-tcs} that  $\sigma_s$ is invertible. By Lemma~\ref{l:equivalent-stable-invertible},   $\sigma_s$ is  stably invertible and the proof of Theorem~\ref{t:main-C} for finitely generated group universes is complete. \qed 

\subsubsection{\discintname}
 The authors have no competing interests to declare that are
relevant to the content of this article. 

 \bibliographystyle{splncs04}

\end{document}